\newcommand{\gra}{\textcolor[rgb]{0.85,0.85,0.85}}
\begin{document}

\title{
Analysis of random walks on a hexagonal lattice
}

\shorttitle{Analysis of random walks on a hexagonal lattice} 
\shortauthorlist{A. Di Crescenzo, C. Macci, B. Martinucci, S. Spina} 

\author{
\name{Antonio Di Crescenzo$^*$}
\address{Dipartimento di Matematica, 
Universit\`a degli Studi di Salerno, Via Giovanni Paolo II n.\ 132, 
84084 Fisciano, SA, Italy\email{$^*$Corresponding author: adicrescenzo@unisa.it}}
\name{Claudio Macci}
\address{Dipartimento di Matematica, 
Universit\`a di Roma Tor Vergata, Via della Ricerca Scientifica, 
00133 Rome, Italy}
\name{Barbara Martinucci}
\address{Dipartimento di Matematica, 
Universit\`a degli Studi di Salerno, Via Giovanni Paolo II n.\ 132, 
84084 Fisciano, SA, Italy}
\and
\name{Serena Spina}
\address{Dipartimento di Matematica, 
Universit\`a degli Studi di Salerno, Via Giovanni Paolo II n.\ 132,
84084 Fisciano, SA, Italy}
}

\maketitle

\begin{abstract}
{
We consider a discrete-time random walk on the nodes of an unbounded hexagonal lattice. 
We determine the probability generating functions,  
the transition probabilities and the relevant moments. The convergence of the stochastic 
process to a 2-dimensional Brownian motion is also discussed. Furthermore, we obtain some 
results on its asymptotic behavior making use of large deviation theory. 
Finally, we investigate the first-passage-time problem of the random walk through a vertical 
straight-line. Under suitable symmetry assumptions we are able to determine the first-passage-time 
probabilities in a closed form, which deserve interest in applied fields. 
}
{Random walk; Hexagonal lattice; 
Probability generating function;  Large deviations; Moderate deviations; First-passage time.}
\\
2000 Math Subject Classification: 60J15; 60F10; 82C41  
\end{abstract}

\section{Introduction}
Stimulated by potential applications in many fields of science and engineering, in this paper 
we aim to study a discrete-time random walk on the nodes of an unbounded hexagonal lattice. 
Specific properties of this kind of structures make them attractive for various applications, such as 
thermal isolation, energy absorption, and structural protection. For a more detailed description on the use 
of honeycomb structures in applied fields see \cite{Haghpanah}. 
Some general results on discrete-time random walks on a lattice 
can be found in \cite{Montroll}, \cite{MontrollWeiss} and in \cite{LawlerLimic}. 
See also the investigation of \cite{Guillotin} concerning random walks on regular graphs, 
and the recent review by \cite{Masuda_etal}.
\par
Our attention focuses on certain mathematical properties of the stochastic process 
under investigation, where 
the underlying lattice state-space is a general honeycomb structure (the hexagonal lattice). 
Specifically, we give emphasis on the transient distribution of the random walk. We also 
study its asymptotic behavior making use of the theory of large deviations. 
In view of the relevance in several applied contexts, 
our efforts are finally oriented to the determination of the first-passage-time 
probabilities of the random walk through suitable straight-line boundaries. 
\par
Two-dimensional random walk models on hexagonal structures deserve interest in 
various applied fields, such as 
Biomathematics, Cellular networks, Physics, and Chemical models.
\par
A correlated random walk on a hexagonal lattice has been 
used by \cite{Prasad} in order to determine the optimal movement 
strategy of an animal searching for resources upon a network of patches.
Moreover, hexagonal lattices have been adopted to characterize landscapes in suitable 
spatial models of bird populations, since hexagons allow better packing of territories in space, 
see \cite{Pulliam}. 
In Personal Communications Services networks, such as the honeycomb Poisson-Voronoi access cellular network 
model, the movement of mobile users may be  captured by random walks models on the hexagonal lattice
(cf.\  \cite{Akyildiz}, \cite{Baccelli}). 
We remark that the above mentioned investigations deal with random walks among adjacent cells. 
\par
Other types of processes dealing with two-dimensional random walks on hexagonal structures
are employed to study:  
the behavior of cracks among frozen regions in a dimer model 
(cf.\   \cite{Boutillier}), the representation of the correlation functions in valence-bond 
solid models (cf.\  \cite{Kennedy}), the light transport in a honeycomb 
structure (cf.\   \cite{Miri}),  electronic properties of deformed carbon 
nanotubes (cf.\   \cite{Schuyler}), and the incoherent energy 
transfer due to long range interactions in two-dimensional regular systems 
(cf.\   \cite{Zumofen}). 
\par   
A mathematical model based on a three-axes description
of the honeycomb lattice has been proposed by    \cite{Cotfas}, where the 
movement of an excitation (or a vacancy) on a quasicrystal is regarded as a suitable 
random walk. A random-walk  model of absorption of an isolated polymer chain 
on various lattice models, including the hexagonal one, is investigated in    \cite{Rubin}. 
Moreover, as a model of polymer dynamics,  \cite{Sokolov} analyzed the 
continuous-time motion of a rigid equilateral triangle in the plane, where the center of mass 
of the triangle performs a random walk on the vertices of the hexagonal lattice. 
%
\par  
Our study is first finalized to obtain a closed-form result of the probability generating function and 
of the probability distribution of the random walk on the hexagonal lattice. This is performed by 
considering a partition of the state space into two sets, i.e.\ the states visited at even and odd times. 
Then, the iterative equations of the relevant probabilities for the two sets are expressed in a  
suitable way. A similar approach has been used by \cite{DMM2014} for the analysis of random walks in continuous 
time characterized by alternating rates. Some auxiliary results are also obtained, such as 
certain symmetry properties of the state probabilities and the relevant moments, 
including the covariance. The validity 
of a customary convergence to a 2-dimensional Brownian motion is also shown. 
Differently from previous investigations oriented to computing numerical 
quantities of interest, such as critical exponents (see  \cite{de_Forcrand}, 
for instance), our approach is mainly theoretical. Indeed, we remark that our 
study leads to closed-form results even in the general case of non-constant 
one-step transition probabilities. For general results on other types of discrete-time 
random walks see the contributions by \cite{Katzenbeisser},  \cite{Bohm} 
and \cite{Panny}. 
\par
Our second aim is to investigate two different forms of asymptotic behavior of the 
random walk on the hexagonal lattice, by making use of some applications of the 
G\"artner Ellis Theorem. About this topic we recall the text by   \cite{FK2006} 
for a wide study on sample path large deviations for general Markov processes. 
It is worth pointing out that our  results concerning the large deviation principle (LDP for short) 
can be used to evaluate some quantities of interest in applied contexts, such as estimates 
of hitting probabilities that are relevant for Monte Carlo simulation through the importance 
sampling technique (see, for instance, the overview in \cite{BlanchetLam2012}, or 
Section 5 and 6 of \cite{Asmussen}, and \cite{Collamore2002}.) 
\par
In conclusion, we also face the first-passage-time problem of the considered random walk 
through straight-line boundaries. We first investigate the ``taboo probability'' concerning  
transitions on the nodes of the hexagonal lattice that avoid the boundary. Suitable 
symmetry conditions allow us to obtain closed-form expressions of such taboo probabilities and, 
in turn, of the corresponding first-passage-time probabilities. 
An example of the usefulness of these results  in the context of polymer science 
is  provided at the end of  Section 5. We note that obtaining closed-form expressions for 
random walks on lattices generally is not an easy task. 
\par 
Here is the plan of the paper:  The main definitions and the description of the process 
are given in Section 2. The probability distribution of the random walk, with the 
generating function and the main moments are obtained in Section 3. 
Section 4 is devoted to investigation of the large and moderate deviations. 
In Section 5 we analyze the first-passage-time problem of the random walk through 
straight-line boundaries. Finally, in Section 6 we provide some concluding remarks.
\section{The Random Walk Model}
We consider the hexagonal lattice on a reference system of cartesian axes, taking  a
vertex of a generic hexagon as the origin of the reference system, as shown in Fig.\ \ref{fig:stati}.
Since the considered structure consists of hexagonal cells, with angles of $2\pi/3$, we can assume 
that the distance between two generic adjacent vertices is a constant, say $a$. 
Hence, the coordinates of the vertices are repeated regularly.
We divide the vertices into two categories:
\begin{equation}\label{def_nodes}
\mathscr{V}_i=\left\{\left(\frac{3}{2}a j +ia;\; \frac{\sqrt{3}}{2} a j +\sqrt{3}a k\right);\;j,k \in \mathbb{Z}\right\},\quad i=0,1.
\end{equation}
\begin{figure}[ht]
\begin{center}
\begin{picture}(341,221)
\put(50,15){\circle{5}}
\put(130,15){\circle{5}}
\put(210,15){\circle{5}}
\put(75,15){\circle*{5}}
\put(155,15){\circle*{5}}
\put(235,15){\circle*{5}}
\put(50,60){\circle{5}}
\put(130,60){\circle{5}}
\put(210,60){\circle{5}}
\put(290,60){\circle{5}}
\put(75,60){\circle*{5}}
\put(155,60){\circle*{5}}
\put(235,60){\circle*{5}}
\put(50,105){\circle{5}}
\put(130,105){\circle{5}}
\put(210,105){\circle{5}}
\put(290,105){\circle{5}}
\put(75,105){\circle*{5}}
\put(155,105){\circle*{5}}
\put(235,105){\circle*{5}}
\put(50,150){\circle{5}}
\put(130,150){\circle{5}}
\put(210,150){\circle{5}}
\put(290,150){\circle{5}}
\put(75,150){\circle*{5}}
\put(155,150){\circle*{5}}
\put(235,150){\circle*{5}}
\put(90,37.5){\circle{5}}
\put(170,37.5){\circle{5}}
\put(250,37.5){\circle{5}}
\put(35,37.5){\circle*{5}}
\put(115,37.5){\circle*{5}}
\put(195,37.5){\circle*{5}}
\put(275,37.5){\circle*{5}}
\put(90,82.5){\circle{5}}
\put(170,82.5){\circle{5}}
\put(250,82.5){\circle{5}}
\put(35,82.5){\circle*{5}}
\put(115,82.5){\circle*{5}}
\put(195,82.5){\circle*{5}}
\put(275,82.5){\circle*{5}}
\put(90,127.5){\circle{5}}
\put(170,127.5){\circle{5}}
\put(250,127.5){\circle{5}}
\put(35,127.5){\circle*{5}}
\put(115,127.5){\circle*{5}}
\put(195,127.5){\circle*{5}}
\put(275,127.5){\circle*{5}}
\put(90,172.5){\circle{5}}
\put(170,172.5){\circle{5}}
\put(250,172.5){\circle{5}}
\put(115,172.5){\circle*{5}}
\put(195,172.5){\circle*{5}}
\put(275,172.5){\circle*{5}}
\put(120,82.5){\line(1,0){5}}
\put(130,82.5){\line(1,0){5}}
\put(140,82.5){\line(1,0){5}}
\put(150,82.5){\line(1,0){5}}
\put(160,82.5){\line(1,0){5}}
\put(40,82.5){\line(1,0){5}}
\put(50,82.5){\line(1,0){5}}
\put(60,82.5){\line(1,0){5}}
\put(70,82.5){\line(1,0){5}}
\put(80,82.5){\line(1,0){5}}
\put(25,82.5){\line(1,0){5}}
\put(15,82.5){\line(1,0){5}}
\put(200,82.5){\line(1,0){5}}
\put(210,82.5){\line(1,0){5}}
\put(220,82.5){\line(1,0){5}}
\put(230,82.5){\line(1,0){5}}
\put(240,82.5){\line(1,0){5}}
\put(280,82.5){\line(1,0){5}}
\put(290,82.5){\line(1,0){5}}
\put(300,82.5){\line(1,0){5}}
\put(310,82.5){\vector(1,0){10}}
\put(170,-2.5){\line(0,1){5}}
\put(170,7.5){\line(0,1){5}}
\put(170,17.5){\line(0,1){5}}
\put(170,27.5){\line(0,1){5}}
\put(170,42.5){\line(0,1){5}}
\put(170,52.5){\line(0,1){5}}
\put(170,62.5){\line(0,1){5}}
\put(170,72.5){\line(0,1){5}}
\put(170,87.5){\line(0,1){5}}
\put(170,97.5){\line(0,1){5}}
\put(170,107.5){\line(0,1){5}}
\put(170,117.5){\line(0,1){5}}
\put(170,132.5){\line(0,1){5}}
\put(170,142.5){\line(0,1){5}}
\put(170,152.5){\line(0,1){5}}
\put(170,162.5){\line(0,1){5}}
\put(170,177.5){\line(0,1){5}}
\put(170,187.5){\line(0,1){5}}
\put(170,197.5){\vector(0,1){10}}
\put(50,60){\line(1,0){25}}
\put(50,60){\line(-2,3){15}}
\put(50,60){\line(-2,-3){15}}
\put(130,60){\line(1,0){25}}
\put(130,60){\line(-2,3){15}}
\put(130,60){\line(-2,-3){15}}
\put(210,60){\line(1,0){25}}
\put(210,60){\line(-2,3){15}}
\put(210,60){\line(-2,-3){15}}
\put(50, 105){\line(1,0){25}}
\put(50, 105){\line(-2,3){15}}
\put(50, 105){\line(-2,-3){15}}
\put(130, 105){\line(1,0){25}}
\put(130, 105){\line(-2,3){15}}
\put(130, 105){\line(-2,-3){15}}
\put(210, 105){\line(1,0){25}}
\put(210, 105){\line(-2,3){15}}
\put(210, 105){\line(-2,-3){15}}
\put(90, 82.5){\line(1,0){25}}
\put(90, 82.5){\line(-2,3){15}}
\put(90, 82.5){\line(-2,-3){15}}
\put(170, 82.5){\line(1,0){25}}
\put(170, 82.5){\line(-2,3){15}}
\put(170, 82.5){\line(-2,-3){15}}
\put(250, 82.5){\line(1,0){25}}
\put(250, 82.5){\line(-2,3){15}}
\put(250, 82.5){\line(-2,-3){15}}
\put(130, 150){\line(1,0){25}}
\put(130, 150){\line(-2,3){15}}
\put(130, 150){\line(-2,-3){15}}
\put(210, 150){\line(1,0){25}}
\put(210, 150){\line(-2,3){15}}
\put(210, 150){\line(-2,-3){15}}
\put(290, 150){\line(-2,3){15}}
\put(290, 150){\line(-2,-3){15}}
\put(90, 127.5){\line(1,0){25}}
\put(90, 127.5){\line(-2,3){15}}
\put(90, 127.5){\line(-2,-3){15}}
\put(170, 127.5){\line(1,0){25}}
\put(170, 127.5){\line(-2,3){15}}
\put(170, 127.5){\line(-2,-3){15}}
\put(250, 127.5){\line(1,0){25}}
\put(250, 127.5){\line(-2,3){15}}
\put(250, 127.5){\line(-2,-3){15}}
\put(35, 127.5){\line(2,3){15}}
\put(50, 150){\line(1,0){25}}
\put(75, 150){\line(2,3){15}}
\put(90, 172.5){\line(1,0){25}}
\put(155, 150){\line(2,3){15}}
\put(170, 172.5){\line(1,0){25}}
\put(235, 150){\line(2,3){15}}
\put(250, 172.5){\line(1,0){25}}
\put(275, 127.5){\line(2,-3){15}}
\put(275, 82.5){\line(2,3){15}}
\put(35, 37.5){\line(2,-3){15}}
\put(50, 15){\line(1,0){25}}
\put(75, 15){\line(2,3){15}}
\put(75, 15){\line(2,3){15}}
\put(90, 37.5){\line(-2,3){15}}
\put(90, 37.5){\line(1,0){25}}
\put(115, 37.5){\line(2,-3){15}}
\put(130, 15){\line(1,0){25}}
\put(155, 15){\line(2,3){15}}
\put(170, 37.5){\line(-2,3){15}}
\put(170, 37.5){\line(1,0){25}}
\put(195, 37.5){\line(2,-3){15}}
\put(210, 15){\line(1,0){25}}
\put(235, 15){\line(2,3){15}}
\put(250, 37.5){\line(-2,3){15}}
\put(250, 37.5){\line(1,0){25}}
\put(275, 37.5){\line(2,3){15}}
\put(275, 37.5){\line(2,3){15}}
\put(290, 60){\line(-2,3){15}}
\put(210, -1){\line(0,1){4}}
\put(235, -1){\line(0,1){4}}
\put(210, 1){\line(1,0){25}}
\put(198,-19){\makebox(50,15)[t]{$a$}}
\put(300,80){\makebox(50,15)[t]{$x$}}
\put(155,200){\makebox(50,15)[t]{$y$}}
\end{picture}
\end{center}
\caption{Graphical representation of the hexagonal lattice, where the vertices of $\mathscr{V}_0$
$(\mathscr{V}_1)$ are represented by white (black) circles.}
\label{fig:stati}
\end{figure}
\begin{figure}[ht]
\begin{center}
\begin{picture}(341,121)
\put(60,60){\vector(1,0){45}}
\put(60,60){\vector(-2,3){25}}
\put(60,60){\vector(-2,-3){25}}
\put(220,60){\vector(-1,0){45}}
\put(220,60){\vector(2,3){25}}
\put(220,60){\vector(2,-3){25}}
\put(80,60){\makebox(50,15)[t]{$q_{0,0}$}}
\put(10,95){\makebox(50,15)[t]{$q_{0,1}$}}
\put(10,0){\makebox(50,15)[t]{$q_{0,2}$}}
\put(150,60){\makebox(50,15)[t]{$q_{1,0}$}}
\put(230,95){\makebox(50,15)[t]{$q_{1,2}$}}
\put(230,0){\makebox(50,15)[t]{$q_{1,1}$}}
\put(60,60){\circle{5}}
\put(220,60){\circle*{5}}
\end{picture}
\end{center}
\caption{One-step transition probabilities.}
\label{fig:transprob}
\end{figure}
\par 
With reference to Fig.\ \ref{fig:stati},
we consider a random walk of a particle that starts at a vertex with coordinates $(x,y)$ and it moves to an
adjacent vertex following an appropriate transition probability. In particular, as shown in Fig.\ \ref{fig:transprob},
if the particle is located in a vertex of the set $\mathscr{V}_0$, it can reach the three adjacent positions
with probabilities $q_{0,0}$, $q_{0,1}$, $q_{0,2}$ and then the particle will occupy a vertex
of $\mathscr{V}_1$. Similarly, if the particle is in a vertex of $\mathscr{V}_1$, in one step it reaches
one of the three adjacent positions, belonging to $\mathscr{V}_0$, with probabilities
$q_{1,0}$, $q_{1,1}$, $q_{1,2}$ (see Fig.\ \ref{fig:transprob}). 
We remark that the hexagonal graph is a bipartite graph, this property being useful for the evaluation 
of the results of Section 3.
\par
Let  $\{(X_n,Y_n), n\in\mathbb{N}_0\}$ be the discrete-time random walk having state space
$\mathscr{V}_0 \cup \mathscr{V}_1$ and representing the position of the particle at time $n$.
From the above 
notations, for $i=0,1$ and $r=0,1,2$ the one-step transition probabilities 
are expressed as 
\begin{equation}\label{eq:qir}
q_{i,r}=\mathbb{P}\left[\binom{X_{n+1}}{Y_{n+1}}
=\binom{x+a \cos\left(r\frac{2}{3}\pi+i \pi\right)}{y+a \sin\left(r\frac{2}{3}\pi + i \pi\right)}\left|
 \binom{X_n}{Y_n}=\binom{x}{y}\right.\right],
\end{equation}
with
$$
 \sum_{r=0}^2 q_{i,r}=1,\qquad i=0,1.
$$
Let us now introduce the state probabilities at time $n$, $n\in\mathbb{N}_0$, 
\begin{equation}\label{p_j_k}
p_{j,k}(n):=\mathbb{P}\left[(X_{n},Y_{n})=\left(\frac{3}{2} a j+i_n a,
\frac{\sqrt{3}}{2} a j +\sqrt{3}a k\right)\right],
\quad j,k\in \mathbb{Z},
\end{equation}
where
\begin{equation}\label{eq:i_n}
i_n=\frac{1}{2}\left(1-(-1)^n\right).
\end{equation}
We assume that $\mathbb{P}[(X_0, Y_0)=(0,0)]=1$, so that the initial condition reads 
\begin{equation}\label{eq:iniziale}
 p_{0,0}(0)=1.
\end{equation}
Hence, from the previous assumptions, and noting that at even (odd) times the particle
occupies states of set $\mathscr{V}_0$  ($\mathscr{V}_1$), the forward Kolmogorov
equations for the state probabilities, for all $n\in \mathbb{N}_0$ are given by
\begin{equation}\label{eq:transprob}
 p_{j,k}(n+1)=
 \left\{
 \begin{array}{ll}
 p_{j,k}(n)q_{0,0}+p_{j+1,k}(n)q_{0,2}+p_{j+1,k-1}(n) q_{0,1}, & n \hbox{ even} \\[2mm]
 p_{j,k}(n)q_{1,0}+p_{j-1,k+1}(n)q_{1,1}+p_{j-1,k-1}(n) q_{1,2}, & n \hbox{ odd}
\end{array}
\right.
\end{equation}
with initial condition (\ref{eq:iniziale}).

%

\section{Probability Distribution}
Now we focus on the probability generating function $G(u,v;n)$ of
$(X_n,Y_n)$. Due to (\ref{p_j_k}), it is defined by 
\begin{equation}\label{prob_gen_func_def}
G(u,v;n)=\mathbb{E}\left[u^{X_n}v^{Y_n}\right]
=\sum_{j\in\mathbb{Z}}u^{\frac{3}{2}aj+i_na}\sum_{k\in\mathbb{Z}}v^{\frac{\sqrt{3}}{2}aj+\sqrt{3}ak}p_{j,k}(n),
\end{equation}
for $u>0$ and $v>0$. 
\begin{proposition}\label{prop:1}
The explicit expression of the probability generating function $G(u,v;n)$ of $(X_n,Y_n)$, 
for $n\in \mathbb{N}_0$, $u>0$ and $v>0$ is:
\begin{equation}\label{prob_gen_func}
G(u,v;n)=F_{i_n}(\tilde{u},\tilde{v};n) u^{i_n a},
\end{equation}
where $i_n$ is defined by (\ref{eq:i_n}), with
\begin{equation}\label{u_v_tilde}
\tilde{u}=u^{\frac{3}{2}a}v^{\frac{\sqrt{3}}{2}a},\qquad\tilde{v}=v^{\sqrt{3}a},
\end{equation}
and
\begin{equation}\label{F_i_expl}
F_{i_n}(u,v;n)=\left( q_{1,0}+q_{1,2}u+q_{1,1}\frac{u}{v} \right)^{\frac{n}{2}-\frac{1}{2}{i_n}}
\left( q_{0,0}+q_{0,1}\frac{v}{u}+q_{0,2}\frac{1}{u}\right)^{\frac{n}{2}+\frac{1}{2}{i_n}}.
\end{equation}
%
\end{proposition}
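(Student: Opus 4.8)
The plan is to strip the claim down to a single recursion for a reduced generating function and then solve that recursion. First I would observe that the monomial exponents in (\ref{prob_gen_func_def}) factor multiplicatively: with $\tilde u=u^{\frac32 a}v^{\frac{\sqrt3}{2}a}$ and $\tilde v=v^{\sqrt3 a}$ as in (\ref{u_v_tilde}) one has $u^{\frac32 aj}v^{\frac{\sqrt3}{2}aj+\sqrt3 ak}=\tilde u^{\,j}\tilde v^{\,k}$, whereas the factor $u^{i_n a}$ carries no summation indices and pulls out of the double sum. Thus (\ref{prob_gen_func_def}) reads $G(u,v;n)=u^{i_n a}\,F(\tilde u,\tilde v;n)$ with
\[
F(w_1,w_2;n):=\sum_{j\in\mathbb Z}\sum_{k\in\mathbb Z}w_1^{\,j}w_2^{\,k}\,p_{j,k}(n),
\]
and the whole problem reduces to identifying $F(w_1,w_2;n)$ with the product appearing in (\ref{F_i_expl}), i.e.\ to proving $F(w_1,w_2;n)=F_{i_n}(w_1,w_2;n)$. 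Note that for each fixed $n$ the mass function $p_{\cdot,\cdot}(n)$ is supported on the finitely many sites reachable in $n$ steps, so $F$ is a Laurent polynomial in $w_1,w_2$ and every manipulation below is purely algebraic.

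Next I would convert the forward equations (\ref{eq:transprob}) into a first-order (in $n$) recursion for $F$. Multiplying each line of (\ref{eq:transprob}) by $w_1^{j}w_2^{k}$ and summing over $j,k\in\mathbb Z$, each shifted probability is re-indexed by the rule $p_{j+s,k+t}(n)\mapsto w_1^{-s}w_2^{-t}F(w_1,w_2;n)$, obtained by the substitution $(j,k)\mapsto(j-s,k-t)$ in a sum over all of $\mathbb Z^2$. Reading off the shifts carried by the three terms in each line then gives
\[
F(w_1,w_2;n+1)=
\begin{cases}
\bigl(q_{0,0}+q_{0,1}\tfrac{w_2}{w_1}+q_{0,2}\tfrac{1}{w_1}\bigr)F(w_1,w_2;n), & n\ \text{even},\\[1mm]
\bigl(q_{1,0}+q_{1,2}w_1+q_{1,1}\tfrac{w_1}{w_2}\bigr)F(w_1,w_2;n), & n\ \text{odd}.
\end{cases}
\]
Call the two multipliers $A(w_1,w_2)$ and $B(w_1,w_2)$; they are precisely the second and the first factor of (\ref{F_i_expl}). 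The delicate point here is matching each displacement in (\ref{eq:qir}) to the correct unit shift of the lattice labels $(j,k)$ — in particular the $q_{1,2}$ transition — since a single mis-identified shift would spoil one of the two multipliers.

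Finally I would solve the alternating recursion from the initial datum. By (\ref{eq:iniziale}) we have $F(w_1,w_2;0)=1$, and iterating up to time $n$ applies the factor $A$ once for each even index in $\{0,\dots,n-1\}$ and the factor $B$ once for each odd index. A short parity count shows that the number of even (resp.\ odd) indices in that range equals $\tfrac n2+\tfrac12 i_n$ (resp.\ $\tfrac n2-\tfrac12 i_n$), so that
\[
F(w_1,w_2;n)=B(w_1,w_2)^{\frac n2-\frac12 i_n}\,A(w_1,w_2)^{\frac n2+\frac12 i_n}=F_{i_n}(w_1,w_2;n).
\]
Equivalently this last step can be phrased as an induction on $n$, where $i_{n+1}=1-i_n$ makes each application of $A$ or $B$ update the two exponents in the required way. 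Substituting $w_1=\tilde u$, $w_2=\tilde v$ and restoring the prefactor $u^{i_n a}$ then yields (\ref{prob_gen_func}). I expect the only genuinely error-prone parts to be the geometry-to-index translation of the shifts and the parity bookkeeping of the exponents $\tfrac n2\pm\tfrac12 i_n$; everything else is routine.
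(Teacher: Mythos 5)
Your proof is correct and takes essentially the same route as the paper's: the paper also reduces $G$ to the lattice-index generating function $\sum_{j,k}u^jv^kp_{j,k}(n)$, extracts the two one-step multipliers from the forward equations, and iterates from $p_{0,0}(0)=1$; the only cosmetic difference is that the paper organizes the alternation as a $2\times 2$ matrix system $\phi^{(n+1)}=M\phi^{(n)}$ for the pair $(F_0,F_1)$ and computes $M^n$, whereas you exploit that only one component is ever nonzero and iterate a scalar recursion, with the same parity count $\frac{n}{2}\pm\frac{1}{2}i_n$ of factors.

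One point deserves emphasis, and it is exactly the ``delicate point'' you flagged about the $q_{1,2}$ transition. The odd-time line of (\ref{eq:transprob}) as printed reads $p_{j-1,k-1}(n)q_{1,2}$; if you apply your re-indexing rule to that term \emph{literally}, you get the multiplier $q_{1,0}+q_{1,1}\frac{w_1}{w_2}+q_{1,2}w_1w_2$, not your $B=q_{1,0}+q_{1,2}w_1+q_{1,1}\frac{w_1}{w_2}$, and the final product would then disagree with (\ref{F_i_expl}). The resolution is that the printed equation contains a typo: from (\ref{eq:qir}), the $q_{1,2}$ transition displaces a $\mathscr{V}_1$ vertex by $\left(\frac{a}{2},\frac{\sqrt{3}}{2}a\right)$, which in the labels of (\ref{def_nodes}) is the index shift $(j,k)\mapsto(j+1,k)$, so the correct term is $p_{j-1,k}(n)q_{1,2}$. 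That corrected term produces precisely your $B$, which coincides with the paper's own $\alpha$ in (\ref{eq:alfabeta}) and with the first factor of (\ref{F_i_expl}). So your multipliers are the right ones — consistent with the geometry and with the statement being proved — but to make the write-up airtight you should derive the shifts from (\ref{eq:qir}) (as you indeed suggest) rather than cite the printed (\ref{eq:transprob}), since the latter, taken at face value, contradicts the recursion you state.
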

\begin{proof}
For all $n\in \mathbb{N}_0$, let us define 
\begin{equation}\label{F_i_def}
F_{i_n}\left(u,v;n \right)= \sum_{j=-\infty}^{+\infty}u^j \sum_{k=-\infty}^{+\infty}v^k p_{j,k}(n), 
\end{equation}
so that for $n$ even (odd),  $F_{0}(u,v;n)$ ($F_{1}(u,v;n)$) is the probability generating function 
of $(X_n,Y_n)$ for the vertex set $\mathscr{V}_0$ ($\mathscr{V}_1$) defined by  
(\ref{def_nodes}). Due to Eq.\ (\ref{eq:transprob}) we have
\begin{equation}\label{system}
\phi^{(n+1)}=M \phi^{(n)},
\end{equation}
where
\begin{equation}\label{def_M_phi}
M=\left(
\begin{array}{ll}
0 & \alpha  \\
\beta  &  0
\end{array}
\right), \qquad \phi^{(n)}={F_0(u,v;n) \choose F_1(u,v;n)},
\end{equation}
and
\begin{equation}\label{eq:alfabeta}
\begin{array}{l}
\alpha=\alpha(u,v)=q_{1,0}+q_{1,2}u+q_{1,1}\frac{u}{v}, \\[2mm]
\beta=\beta(u,v)=q_{0,0}+q_{0,1}\frac{v}{u}+q_{0,2}\frac{1}{u}.
\end{array}
\end{equation}
Due to  initial condition (\ref{eq:iniziale}) we have $\phi^{(0)}={1 \choose 0}$,
so that the system (\ref{system}) has solution:
$$\phi^{(n)}=M^n \phi^{(0)},$$
where
$$M^n=\left(
\begin{array}{ll}
\alpha^{\frac{n}{2}}\beta^{\frac{n}{2}} & \quad 0\\
0 & \quad \alpha^{\frac{n}{2}}\beta^{\frac{n}{2}}
\end{array}
\right),\qquad \textrm{ if $n$ is even},
$$
$$M^n=\left(
\begin{array}{ll}
0 & \alpha^{\frac{n+1}{2}}\beta^{\frac{n-1}{2}} \\
\alpha^{\frac{n+1}{2}}\beta^{\frac{n-1}{2}} &  0
\end{array}
\right),\qquad \textrm{ if $n$ is odd}.
$$
Recalling (\ref{def_M_phi}), we have the following explicit expressions:
\begin{equation}\label{F_01_expl}
F_0\left(u,v;n \right)=\left\{
\begin{array}{ll}
\alpha^{\frac{n}{2}}\beta^{\frac{n}{2}},& \textrm{$n$  even}\\
0, & \textrm{$n$ odd},
\end{array}
\right.
\quad
F_1\left(u,v;n \right)=\left\{
\begin{array}{ll}
0,& \textrm{$n$ even}\\
\alpha^{\frac{n-1}{2}}\beta^{\frac{n+1}{2}}, & \textrm{$n$ odd}.
\end{array}
\right.
\end{equation}
Now, noting that the generating function (\ref{prob_gen_func_def}) can be written,
due to (\ref{F_i_def}), as
$$
 G(u,v;n)=\left\{
\begin{array}{ll}
F_0(\tilde{u},\tilde{v};n),& n\textrm{ even}\\
&\\
F_1(\tilde{u},\tilde{v};n) u^a,& n\textrm{ odd},
\end{array}\right.
$$
by comparing this last expression with (\ref{F_i_def}),  (\ref{eq:alfabeta}), (\ref{F_01_expl}),
and by taking into account (\ref{u_v_tilde}), Eq.\ (\ref{prob_gen_func}) thus follows.
\end{proof}
\par
Let us recall the Gauss hypergeometric function
\begin{equation}
 {}_{2}F_{1}(a,b;c;z)=\sum_{n=0}^{+\infty}
 \frac{(a)_n (b)_n}{(c)_n}\,\frac{z^n}{n!},
\label{2F1}
\end{equation}
where  $(a)_n$ is the Pochhammer symbol 
defined by $(a)_n=a(a+1)\ldots(a+n-1)$ for $n\in \mathbb{N}$ and  $(a)_0=1$.
\par
We recall that, due to initial condition (\ref{eq:iniziale}), the random walk $(X_n,Y_n)$ occupies the 
states of the vertex set ${\cal V}_0$ $({\cal V}_1)$ at even (odd) times, see Eq.\ (\ref{def_nodes}). 
We now determine the state probabilities of $(X_n,Y_n)$ at even times.
\begin{proposition}\label{propProb}
Let $m\in \mathbb{N}$, and 
\begin{equation}
 \rho= \frac{q_{0,1} q_{1,1}}{q_{0,2} q_{1,2}}.
 \label{eq:defrho}
\end{equation}
(i) For $0\leq j\leq m$ and $-m\leq k\leq 0$,
\begin{eqnarray*}
p_{j,k}(2 m) &=& \sum_{t=0}^{m-j} {m \choose t}{m \choose j+t} {j+t \choose -k}
q_{0,0}^{m-t}\, q_{1,0}^{m-j-t}\, q_{0,2}^{t}\, q_{1,2}^{j+k+t}\, q_{1,1}^{-k}   \\
&\times &  {}_{2}F_{1}\left(-j-k-t, -t; 1-k;\rho\right).
\end{eqnarray*}
(ii) For $0\leq j\leq m$ and $1\leq k\leq m-j$,
\begin{eqnarray*}
p_{j,k}(2 m) &=& \sum_{t=k}^{m-j} {m \choose t}{m \choose j+t} {t \choose k}
q_{0,0}^{m-t}\, q_{1,0}^{m-j-t}\, q_{0,1}^{k}\, q_{0,2}^{-k+t}\, q_{1,2}^{j+t} \\
&\times &{}_{2}F_{1}\left(-j-t, k-t; 1+k;\rho\right).
\end{eqnarray*}
(iii) For $-m\leq j\leq -1$ and $-m-j\leq k\leq -1$,
\begin{eqnarray*}
p_{j,k}(2 m) &=& \sum_{t=-k}^{m+j} {m \choose t}{m \choose -j+t} {t \choose -k}
q_{0,0}^{m+j-t}\, q_{1,0}^{m-t}\, q_{0,2}^{-j+t}\, q_{1,2}^{k+t} \, q_{1,1}^{-k} \\
&\times & {}_{2}F_{1}\left(j-t, -k-t; 1-k;\rho\right).
\end{eqnarray*}
(iv) For $-m\leq j\leq -1$ and $0\leq k\leq m$,
\begin{eqnarray*}
p_{j,k}(2 m) &=& \sum_{t=0}^{m+j} {m \choose t}{m \choose -j+t} {-j+t \choose k}
q_{0,0}^{m-t}\, q_{1,0}^{m+j-t}\, q_{0,1}^{k}\, q_{0,2}^{-j-k+t}\, q_{1,2}^{t}  \\
&\times & {}_{2}F_{1}\left(j+k-t, -t; 1+k;\rho\right).
\end{eqnarray*}
\end{proposition}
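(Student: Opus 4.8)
The plan is to read off $p_{j,k}(2m)$ directly from the generating function of Proposition \ref{prop:1} and then to resolve the resulting double sum into a product of binomials times a terminating Gauss hypergeometric series.

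First I would specialize to even times $n=2m$, where $i_n=0$. From (\ref{F_01_expl}) the relevant generating function is
$$
F_0(u,v;2m)=\alpha^m\beta^m,
$$
with $\alpha,\beta$ as in (\ref{eq:alfabeta}), and by the definition (\ref{F_i_def}) the probability $p_{j,k}(2m)$ is exactly the coefficient of $u^jv^k$ in $\alpha^m\beta^m$. Thus the whole statement reduces to a coefficient extraction.

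Next I would expand $\alpha^m$ and $\beta^m$ by the multinomial theorem, recording the exponents of $q_{1,0},q_{1,2},q_{1,1}$ in $\alpha^m$ and of $q_{0,0},q_{0,1},q_{0,2}$ in $\beta^m$. The monomial carried by a generic term is a power $u^{\,\cdot}v^{\,\cdot}$ whose exponents are linear in those six constrained indices. Imposing that the $u$-power equal $j$ and the $v$-power equal $k$, together with the two multinomial constraints $(\,\cdot\,)=m$, leaves exactly two free integer indices. I would choose a convenient outer index $t$ and an inner index $s$, and note that incrementing $s$ by one trades a factor $q_{0,2}q_{1,2}$ for $q_{0,1}q_{1,1}$, i.e.\ multiplies the term by $\rho$ of (\ref{eq:defrho}); this is precisely the mechanism that makes $\rho$ the natural variable of the series.

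The decisive step is then to regroup the two surviving multinomial coefficients. Factoring out the value at $s=0$, the quotient of the general coefficient by the $s=0$ one collapses, via the falling-factorial identity $(N)(N-1)\cdots(N-s+1)=(-1)^s(-N)_s$, into the Pochhammer ratio $(A)_s(B)_s/[(C)_s\,s!]$ with the upper parameters non-positive integers; multiplied by $\rho^s$ this is the summand of a terminating ${}_2F_1(A,B;C;\rho)$, while the $s=0$ coefficient rearranges into the product of three binomials displayed in each case. I would verify this regrouping once, e.g.\ in case (i), obtaining $\binom{m}{t}\binom{m}{j+t}\binom{j+t}{-k}$ with $A=-j-k-t$, $B=-t$, $C=1-k$.

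Finally, the four cases (i)--(iv) are produced by the non-negativity requirements on the six exponents: the signs of $j$ and $k$ fix which of $q_{1,1},q_{0,1}$ carries the $|k|$-power and determine the admissible range of the outer index $t$, hence the stated summation limits, while the remaining small-index terms vanish automatically because the corresponding binomial coefficients are zero. The main obstacle is precisely this bookkeeping: keeping track of the non-negativity constraints, the correct placement of $q_{1,1}^{-k}$ versus $q_{0,1}^{k}$, and the exact Pochhammer signs consistently across all four sign regimes; the algebra of the earlier steps is otherwise routine.
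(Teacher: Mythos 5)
Your proposal is correct and follows essentially the same route as the paper: the paper's proof likewise extracts the coefficient of $u^j v^k$ from $F_0(u,v;2m)=\alpha^m\beta^m$, uses the definition (\ref{2F1}) of ${}_2F_1$ together with Pochhammer-symbol identities to regroup the multinomial expansion into terminating hypergeometric series, and handles the sign regimes of $j$ and $k$ case by case. Your write-up in fact supplies the ``cumbersome calculations'' that the paper omits, including the correct identification of the inner index whose increment trades $q_{0,2}q_{1,2}$ for $q_{0,1}q_{1,1}$ and thus produces the variable $\rho$.
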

\begin{proof}
By extracting the coefficients of $u^j$ and $v^k$ in Eq.\ (\ref{F_i_def}) for $i=0$,
making use of definition (\ref{2F1}) and recalling that
$$
(x)_n=\frac{(-1)^n}{(1-x)_n},\quad n\in {\mathbb Z},
$$
the proof then follows after very cumbersome and tedious calculations, and then are omitted.
\end{proof}
%
%
%
\par
%
\par
Symmetry properties of two-dimensional stochastic processes are often encountered in various 
applications (see, for instance, Proposition 2.1 of  \cite{DM2008} for 
a family of two-dimensional continuous-time random walks). Hereafter we exploit various symmetry 
properties for the transition probabilities given in Proposition \ref{propProb}. 
\begin{corollary}\label{coroll:symm}
From Proposition \ref{propProb} the following symmetry properties hold for $m\in \mathbb{N}$. 
\\
(i) For $-m\leq j\leq m$ and  $-m\leq k\leq m$, if $q_{0,1}=q_{1,2}$ and $q_{1,1}=q_{0,2}$,  we have 
$$
 p_{j,k}(2 m)= p_{-j,j+k}(2 m).
$$
(ii) For $-m\leq j\leq m$ and  $-m\leq k\leq m$ we have 
$$
 p_{j,k}(2 m)=\xi^{2k+j} p_{j,-j-k}(2 m), \qquad \hbox{for \ } \frac{q_{0,1}}{q_{0,2}}=\frac{q_{1,2}}{q_{1,1}}=\xi.
$$
(iii) For $-m\leq j\leq m$ and  $-m\leq k\leq m$, if $q_{0,1}=q_{1,2}$ and $q_{1,1}=q_{0,2}$, we have 
$$
 p_{j,k}(2 m)=\delta^{2k+j} p_{-j,-k}(2 m), \qquad \hbox{for \ } \frac{q_{0,1}}{q_{1,1}}=\frac{q_{1,2}}{q_{0,2}}=\delta.
$$
\end{corollary}
\par
Specifically, with reference to Fig.\ 1, case (i) of Corollary \ref{coroll:symm} is concerning 
the symmetry with respect to $y$-axes, while case (ii) refers to $x$-axes, and case (iii) to the origin. 
\par
We remark that in all cases treated in Corollary \ref{coroll:symm}, 
from (\ref{eq:defrho}) we have $\rho=1$. 
This condition allows to simplify the expressions of the state probabilities 
given in Proposition \ref{propProb}, since 
${}_{2}F_{1}\left(a,b; c;1\right)=\frac{\Gamma(c)\Gamma(c-a-b)}{\Gamma(c-a)\Gamma(c-b)}$ 
(see, for instance, Eq.\ 15.1.20 of \cite{Abr}). 
\par
We point out that the state probabilities given in Proposition \ref{propProb} 
can be evaluated for odd times making use of equation (\ref{eq:transprob}).
Specifically, it is not hard to see that $p_{j,k}(2 m+1) >0$
for $0\leq j\leq m$ and $-m\leq k\leq m-j$, and
for $-m-1\leq j\leq -1$ and $-m-j-1\leq k\leq m+1$. Hence, Corollary \ref{coroll:symm} can be 
extended to the case of odd times. 
\par
Denoting by $\tilde{X}_i$ the displacement of the $i$-th step on the $x$ axis and by
$\tilde{Y}_i$ the displacement of the $i$-th step on the $y$ axis, for $i=1,2,\ldots,n$,
the random walk $(X_n,Y_n)$ can be expressed as
\begin{equation}\label{sum}
 S_n:=\binom{X_n}{ Y_n}
 =\binom{\tilde{X}_1+\tilde{X}_2+\ldots +\tilde{X}_n }{ \tilde{Y}_1+\tilde{Y}_2+\ldots +\tilde{Y}_n},
 \qquad n\in\mathbb{N},
\end{equation}
the joint distribution of $(\tilde{X}_i,\tilde{Y}_i)$ being given by Eq.\ (\ref{eq:qir}).
\par
Let us now determine some moments of interest, that will be used in the sequel.
\begin{proposition}\label{mean_var_cov}
For all $n \in \mathbb{N}$, for the random vector $S_n$ defined by (\ref{sum})
the vector mean is:
\begin{equation}
 m_n:=\left(
 \begin{array}{c}
 \mathbb{E}(X_n) \\[0.1cm]
 \mathbb{E}(Y_n)
 \end{array}
 \right)
 =
 \left(
 \begin{array}{c}
 \mu_1 n+ \theta_1 i_n \\[0.1cm]
 \mu_2 n+ \theta_2 i_n
 \end{array}
 \right),
 \label{eq:mean}
\end{equation}
where $i_n$ is defined by (\ref{eq:i_n}), and
$$
 \mu_1=\frac{3}{4}a\left[(q_{1,1}+q_{1,2})-(q_{0,1}+q_{0,2})\right],
 \quad
  \theta_1=a-\frac{3}{4} a \left[(q_{1,1}+q_{1,2})+(q_{0,1}+q_{0,2})\right],
$$
$$
 \mu_2=\frac{\sqrt{3}}{4}a\left[(q_{0,1}-q_{0,2})-(q_{1,1}-q_{1,2})\right],
 \quad
  \theta_2=\frac{\sqrt{3}}{4}a\left[(q_{0,1}-q_{0,2})+(q_{1,1}-q_{1,2})\right].
$$
The variance of $S_n$ is expressed as
$$
 \left(
 \begin{array}{c}
 Var(X_n) \\[0.1cm]
 Var(Y_n)
 \end{array}
 \right)
 =
 \left(
 \begin{array}{c}
 \sigma_1^2 n+ \theta_3 i_n \\[0.1cm]
 \sigma_2^2 n+ \theta_4 i_n
 \end{array}
 \right),
$$
where
$$
 \sigma_1^2=\frac{9}{8} a^2 \left[( q_{0,1}+q_{0,2})-( q_{0,1}+q_{0,2})^2+(q_{1,1}+q_{1,2})-(q_{1,1}+q_{1,2})^2\right],
$$
$$
  \theta_3=\frac{9}{8}a^2\left[( q_{0,1}+q_{0,2})-( q_{0,1}+q_{0,2})^2-(q_{1,1}+q_{1,2})+(q_{1,1}+q_{1,2})^2\right],
$$
$$
 \sigma_2^2=\frac{3}{8} a^2 \left[( q_{0,1}+q_{0,2})-( q_{0,1}-q_{0,2})^2+(q_{1,1}+q_{1,2})-(q_{1,1}-q_{1,2})^2\right],
$$
$$
  \theta_4=\frac{3}{8} a^2 \left[( q_{0,1}+q_{0,2})-( q_{0,1}-q_{0,2})^2-(q_{1,1}+q_{1,2})+(q_{1,1}-q_{1,2})^2\right].
$$
Finally, the expression of the covariance is
$$
Cov(X_n,Y_n)= \sigma_{1,2} n+ \theta_5 i_n,
$$
where
$$
 \sigma_{1,2}=3\frac{\sqrt{3}}{8}a^2 \left[q_{0,1}(q_{0,1}-1)+q_{0,2}(1-q_{0,2})-q_{1,1}(1-q_{1,1})+q_{1,2}(1-q_{1,2})\right],
$$
$$
  \theta_5=3\frac{\sqrt{3}}{8}a^2 \left[q_{0,1}(q_{0,1}-1)+q_{0,2}(1-q_{0,2})+q_{1,1}(1-q_{1,1})-q_{1,2}(1-q_{1,2})\right].
$$
\end{proposition}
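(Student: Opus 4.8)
The plan is to build the three moment formulas directly from the decomposition (\ref{sum}) of $S_n$ into its increments $(\tilde X_i,\tilde Y_i)$, exploiting the key fact that these increments are \emph{independent}. Indeed, by (\ref{eq:qir}) the law of the $i$-th step depends only on whether the vertex occupied at time $i-1$ belongs to $\mathscr{V}_0$ or to $\mathscr{V}_1$; since the particle sits in $\mathscr{V}_0$ at even times and in $\mathscr{V}_1$ at odd times, this category is deterministic, so the distribution of $(\tilde X_i,\tilde Y_i)$ depends only on the parity of $i$ and the increments are mutually independent. Reading off (\ref{eq:qir}) with $i=0$, a step taken from $\mathscr{V}_0$ (i.e.\ odd index $i$) equals $(a,0)$, $(-\frac a2,\frac{\sqrt3}{2}a)$, $(-\frac a2,-\frac{\sqrt3}{2}a)$ with probabilities $q_{0,0},q_{0,1},q_{0,2}$; with $i=1$, a step taken from $\mathscr{V}_1$ (even index $i$) equals $(-a,0)$, $(\frac a2,-\frac{\sqrt3}{2}a)$, $(\frac a2,\frac{\sqrt3}{2}a)$ with probabilities $q_{1,0},q_{1,1},q_{1,2}$.

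First I would compute the per-step moments separately for the two parity classes. Writing $A=q_{0,1}+q_{0,2}$ and $B=q_{1,1}+q_{1,2}$ and using $q_{i,0}=1-q_{i,1}-q_{i,2}$, a direct evaluation of the finite sums over the three increment values gives, for a step from $\mathscr{V}_0$, $\mathbb{E}(\tilde X)=a(1-\tfrac32 A)$ and $\mathbb{E}(\tilde Y)=\tfrac{\sqrt3}{2}a(q_{0,1}-q_{0,2})$, with the analogous expressions for a step from $\mathscr{V}_1$ obtained by replacing $A$ by $B$ and the sign of the $x$-displacement. The same bookkeeping produces $\mathbb{E}(\tilde X^2)$, $\mathbb{E}(\tilde Y^2)$, $\mathbb{E}(\tilde X\tilde Y)$, whence $Var(\tilde X)$, $Var(\tilde Y)$, $Cov(\tilde X,\tilde Y)$ follow; for instance for the $\mathscr{V}_0$-class one finds $Var(\tilde X)=\tfrac94 a^2 A(1-A)$ and $Cov(\tilde X,\tilde Y)=\tfrac{3\sqrt3}{4}a^2(q_{0,1}-q_{0,2})(A-1)$.

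The second step is to assemble the $n$-step quantities. By linearity of expectation, and by independence of the increments for the variances and covariance, each moment of $S_n$ is the sum of the corresponding per-step moments. Among the $n$ increments there are exactly $\frac{n+i_n}{2}=\lceil n/2\rceil$ of $\mathscr{V}_0$-type and $\frac{n-i_n}{2}=\lfloor n/2\rfloor$ of $\mathscr{V}_1$-type, where $i_n$ is given in (\ref{eq:i_n}). Hence, if $V_0$ and $V_1$ denote the common per-step value of any one of the above moments for the two types, the corresponding moment of $S_n$ equals $\frac{n+i_n}{2}V_0+\frac{n-i_n}{2}V_1=\frac n2(V_0+V_1)+\frac{i_n}{2}(V_0-V_1)$. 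Thus the coefficient of $n$ is the symmetric half-sum $\frac12(V_0+V_1)$ and the coefficient of $i_n$ is the antisymmetric half-difference $\frac12(V_0-V_1)$; carrying this out for each moment produces $\mu_1,\mu_2,\sigma_1^2,\sigma_2^2,\sigma_{1,2}$ as the half-sums and $\theta_1,\dots,\theta_5$ as the half-differences.

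I expect the only genuine obstacle to be the algebraic reduction of the covariance, where the factored per-step value $Cov(\tilde X,\tilde Y)=\mathbb{E}(\tilde X\tilde Y)-\mathbb{E}(\tilde X)\mathbb{E}(\tilde Y)$ must be brought into the symmetric form displayed in the statement. The crucial identity is $(q_{0,1}-q_{0,2})(q_{0,1}+q_{0,2}-1)=q_{0,1}(q_{0,1}-1)+q_{0,2}(1-q_{0,2})$, together with its $\mathscr{V}_1$-analogue, which convert the factored expressions of Step~1 into the polynomial combinations defining $\sigma_{1,2}$ and $\theta_5$. Everything else reduces to routine evaluation of sums over the three admissible increments, so no conceptual difficulty remains once independence and the parity count are in place.
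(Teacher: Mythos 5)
Your proof is correct, but it takes a genuinely different route from the paper's. The paper proves Proposition \ref{mean_var_cov} by differentiating the moment generating function $G(e^{\lambda_1},e^{\lambda_2};n)$ obtained in closed form in Proposition \ref{prop:1}, so its argument is a one-line appeal to that proposition plus mechanical differentiation at $(\lambda_1,\lambda_2)=(0,0)$. You instead work directly with the increment decomposition (\ref{sum}): you note that the category ($\mathscr{V}_0$ or $\mathscr{V}_1$) of the occupied vertex is a deterministic function of time, so the increments $(\tilde{X}_i,\tilde{Y}_i)$ are mutually independent with law depending only on the parity of $i$; you then compute the three-point per-step moments in each parity class and assemble the $n$-step moments as $\frac{n+i_n}{2}V_0+\frac{n-i_n}{2}V_1=\frac{n}{2}(V_0+V_1)+\frac{i_n}{2}(V_0-V_1)$, which exhibits the coefficients of $n$ as half-sums and those of $i_n$ as half-differences. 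I checked the details: your step increments read off (\ref{eq:qir}) are correct, the parity counts $\lceil n/2\rceil=\frac{n+i_n}{2}$ and $\lfloor n/2\rfloor=\frac{n-i_n}{2}$ are right, the per-step values $Var(\tilde{X})=\frac{9}{4}a^2A(1-A)$ and $Cov(\tilde{X},\tilde{Y})=\frac{3\sqrt{3}}{4}a^2(q_{0,1}-q_{0,2})(A-1)$ for the $\mathscr{V}_0$ class are correct, and the identity $(q_{0,1}-q_{0,2})(q_{0,1}+q_{0,2}-1)=q_{0,1}(q_{0,1}-1)+q_{0,2}(1-q_{0,2})$, with its $\mathscr{V}_1$ analogue, does reduce the factored covariance to the form $\sigma_{1,2}n+\theta_5 i_n$ of the statement. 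As for what each approach buys: the paper's proof is shorter given that Proposition \ref{prop:1} is already in hand and extends mechanically to higher moments, while yours is self-contained (it never uses Proposition \ref{prop:1}), makes structurally transparent why every moment is affine in $n$ with an $i_n$ parity correction, and rests on exactly the independence-plus-parity observation that the paper itself only invokes later, in its proof of the central limit theorem (Proposition \ref{Prop:TCL}).
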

\begin{proof}
The proof follows making use of the probability generating function obtained in
Proposition \ref{prop:1}.
\end{proof}
\par
We are now able to state a central limit theorem for the considered random walk.
\begin{proposition}\label{Prop:TCL}
Let $S_n$ be the random vector defined by (\ref{sum}), and $m_n$ its mean given in (\ref{eq:mean}).
Then, as $n\to \infty$,
$$
 n^{-1/2}(S_n- m_n)
$$
converges weakly to the centered bivariate normal distribution with covariance matrix  
\begin{equation}
C
 :=\left(\begin{array}{cc}
 \sigma_1^2&\  \sigma_{1,2}\\[0.1cm]
 \sigma_{1,2}&\  \sigma_2^2
\end{array}\right),
\label{eq:matrxC}
\end{equation}
whose elements are expressed in Proposition \ref{mean_var_cov}.
\end{proposition}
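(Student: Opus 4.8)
The plan is to prove the statement directly from the additive representation (\ref{sum}) together with a central limit theorem for sums of independent, non-identically distributed summands, rather than through the explicit generating function. The key structural remark is that, by (\ref{eq:qir}), the increments $(\tilde X_i,\tilde Y_i)$, $i=1,\dots,n$, are independent and their common law depends only on the parity of $i$: a step leaving a vertex of $\mathscr V_0$ (governed by $q_{0,\cdot}$) alternates deterministically with a step leaving a vertex of $\mathscr V_1$ (governed by $q_{1,\cdot}$). Moreover each increment is bounded by $a$ in absolute value along each axis, so all of its moments are finite and uniformly bounded in $i$. Thus $S_n$ is a sum of independent, uniformly bounded, period-two distributed random vectors, and $m_n=\mathbb E(S_n)$ is its exact mean by Proposition \ref{mean_var_cov}.

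First I would reduce the bivariate convergence to a one-dimensional one by the Cram\'er--Wold device: it suffices to show that for every fixed $\lambda=(\lambda_1,\lambda_2)^{\top}\in\mathbb R^2$ the scalar sequence $n^{-1/2}\lambda^{\top}(S_n-m_n)$ converges weakly to the centered normal law of variance $\lambda^{\top}C\lambda$. Setting $W_i:=\lambda_1\tilde X_i+\lambda_2\tilde Y_i$, the sum $\lambda^{\top}S_n=\sum_{i=1}^{n}W_i$ consists of independent, uniformly bounded summands whose variance, by Proposition \ref{mean_var_cov}, grows linearly in $n$. Because the $W_i-\mathbb E(W_i)$ are uniformly bounded while $s_n^2:=\mathrm{Var}(\lambda^{\top}S_n)\to\infty$, the Lindeberg condition holds automatically (the truncation indicators vanish for $n$ large), and the Lindeberg--Feller CLT gives asymptotic normality. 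To pin down the limiting variance I would invoke Proposition \ref{mean_var_cov} once more: the covariance matrix factorizes as
$$
 \mathrm{Cov}(S_n)=n\,C+i_n\left(\begin{array}{cc}\theta_3&\theta_5\\ \theta_5&\theta_4\end{array}\right),
$$
with $i_n\in\{0,1\}$ from (\ref{eq:i_n}) and $C$ as in (\ref{eq:matrxC}); hence $n^{-1}\mathrm{Cov}(S_n)\to C$ and $n^{-1}s_n^2\to\lambda^{\top}C\lambda$. Combining this with the Cram\'er--Wold reduction yields the asserted weak convergence.

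The one point demanding care---more a bookkeeping nuisance than a genuine obstacle---is the period-two, non-identical distribution of the increments. A clean way to neutralize it is to group the steps in consecutive pairs: for $n=2m$ one has $S_{2m}=\sum_{l=1}^{m}Z_l$ with $Z_l:=(\tilde X_{2l-1}+\tilde X_{2l},\,\tilde Y_{2l-1}+\tilde Y_{2l})$ now \emph{independent and identically distributed}, so the classical multivariate CLT applies verbatim; the odd case $n=2m+1$ then follows by adding a single bounded increment, whose rescaled contribution is of order $n^{-1/2}$ and thus asymptotically negligible. Alternatively, and in the spirit of the generating-function machinery already developed, one may evaluate $G$ from Proposition \ref{prop:1} at $u=e^{s_1/\sqrt n}$ and $v=e^{s_2/\sqrt n}$, take logarithms in the factorized form (\ref{F_i_expl}), and perform a second-order Taylor expansion in $n^{-1/2}$; the linear terms cancel against $n^{-1/2}s^{\top}m_n$ and the quadratic terms reproduce $\tfrac12 s^{\top}C s$, so the moment generating function of $n^{-1/2}(S_n-m_n)$ converges to $\exp\!\big(\tfrac12 s^{\top}C s\big)$ and L\'evy's continuity theorem concludes.
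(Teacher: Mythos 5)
Your proposal is correct, and it actually contains the paper's entire proof as one of its side remarks: the paper's argument consists solely of the observation that the increments $\binom{\tilde X_i}{\tilde Y_i}$ are independent and that the paired sums $\binom{\tilde X_{2i-1}+\tilde X_{2i}}{\tilde Y_{2i-1}+\tilde Y_{2i}}$ are identically distributed --- precisely the grouping device you describe in your third paragraph (the paper does not even spell out the i.i.d.\ multivariate CLT application, the identification $\mathrm{Cov}(Z_1)=2C$, or the odd-$n$ case, which you correctly dispatch by a negligible $O(n^{-1/2})$ bounded increment). Your primary route --- Cram\'er--Wold followed by Lindeberg--Feller for independent, uniformly bounded, period-two summands --- is genuinely different and equally valid; what it buys is that it never uses the identical distribution of the paired blocks, so it would survive any deterministic (even aperiodic) modulation of the step law, whereas the pairing trick leans on the specific period-two structure to reduce to the classical i.i.d.\ theorem. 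Two small points in your main line deserve care. First, the claim that $s_n^2=\mathrm{Var}(\lambda^{\top}S_n)$ grows linearly presupposes $\lambda^{\top}C\lambda>0$; since $C$ is only positive semi-definite in general (it degenerates for extreme choices of the $q_{i,r}$), directions with $\lambda^{\top}C\lambda=0$ must be handled separately --- Chebyshev gives convergence in probability to $0$, which is the correct degenerate Gaussian limit --- after which Cram\'er--Wold applies to every $\lambda$. Second, in your generating-function alternative, convergence of moment generating functions is concluded via the mgf continuity theorem (Curtiss) rather than L\'evy's theorem, which concerns characteristic functions; alternatively run the same expansion at purely imaginary arguments.
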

\begin{proof}
The proof follows noting that ${\tilde{X}_i \choose \tilde{Y}_i}$, $i=1,2,\ldots$, are independent and
${\tilde{X}_{2i-1}+\tilde{X}_{2i} \choose \tilde{Y}_{2i-1}+\tilde{Y}_{2i}}$, $i=1,2,\ldots$,
are identically distributed.
\end{proof}
\par
Hereafter we discuss another form of convergence to the bivariate normal distribution, which
involves also a scaling of the hexagonal lattice, and a convergence to Brownian motion.
\begin{remark}\label{rem:differenbt-scaling-limit}
Let $\{ S^*_k(t); \; t\geq 0\}_{k \in \mathbb{N}}$ be a sequence
of continuous-time stochastic processes defined in terms of the
random vector (\ref{sum}), such that
$$
 S^*_k(t)=S_{\left\lfloor t \right\rfloor k}\big|_{a\equiv a/\sqrt{k}}
 =\frac{1}{\sqrt{k}} S_{\left\lfloor t \right\rfloor k}, \qquad t\geq 0.
$$
Hence, from  Proposition \ref{Prop:TCL} it follows that, as $k \to \infty$,
$$
\left\lfloor t \right\rfloor^{-\frac{1}{2}}\left[S^*_k(t)-\mathbb E(S^*_k(t))\right]
$$
converges weakly to  the centered bivariate normal distribution with covariance matrix
(\ref{eq:matrxC}).
\end{remark}
\begin{remark}\label{rem:donsker-theorem-application}
As application of the multidimensional Donsker's Theorem,
and by Proposition \ref{Prop:TCL}, the normalized partial-sum
process
$$
 {\bf S}_n(t):= n^{-1/2}(S_{\lfloor n t  \rfloor} - m_{\lfloor n t\rfloor}), \qquad t\geq 0,
$$
converges weakly to $\textbf{B}D$ (as $n\to \infty$), where
$\textbf{B}$ is the standard $2$-dimensional Brownian motion, and
$D$ is a $2 \times 2$ matrix such that $D^T D= C$, with $C$ defined
by (\ref{eq:matrxC}). In other words we mean
$\textbf{B}D \stackrel{d}{=} \{\textbf{B}(t; 0, C): t\geq 0\}$,
where $\textbf{B}(t; 0, C)$ denotes the $2$-dimensional Brownian
motion with drift vector 0 and covariance matrix $C$. 
\end{remark}
%
\section{Large and Moderate Deviations}
%
We start this section by recalling some well-known basic definitions in large
deviations (see \cite{DemboZeitouni} as a reference on this
topic). A sequence of positive numbers $\{v_n:n\geq 1\}$ is called
speed if $\lim_{n\to\infty}v_n=\infty$. Given a topological space
$\mathcal{Z}$, a lower semi-continuous function
$I:\mathcal{Z}\to[0,\infty]$ is called rate function. 
If the level sets $\{\{z\in\mathcal{Z}:I(z)\leq\eta\}:\eta\geq 0\}$
are compact, the rate function $I$ is said to be good. Finally a
sequence of random variables $\{Z_n:n\geq 1\}$, taking values on a
topological space $\mathcal{Z}$, satisfies the LDP with speed $v_n$ and rate function $I$
if
$$\liminf_{n\to\infty}\frac{1}{v_n}\log P(Z_n\in O)\geq-\inf_{z\in O}I(z)\ \mbox{for all open sets}\ O$$
and
$$\limsup_{n\to\infty}\frac{1}{v_n}\log P(Z_n\in C)\leq-\inf_{z\in O}I(z)\ \mbox{for all closed sets}\ C.$$

In this section we prove two results:
\begin{enumerate}
\item the LDP of
$\left\{\left(\frac{X_n}{n},\frac{Y_n}{n}\right):n\geq 1\right\}$,
with speed $v_n=n$;
\item for all sequences of positive numbers $\{a_n:n\geq 1\}$ such that
\begin{equation}\label{eq:MD-conditions}
a_n\to 0\quad \mbox{and}\quad na_n\to\infty\quad (\mbox{as}\
n\to\infty),
\end{equation}
the LDP of
$\left\{\sqrt{na_n}\left(\frac{X_n-\mathbb{E}(X_n)}{n},\frac{Y_n-\mathbb{E}(Y_n)}{n}\right):n\geq
1\right\}$, with speed function $1/a_n$.
\end{enumerate}
In both cases we prove the LDPs with an application of G\"{a}rtner
Ellis Theorem (see e.g.\ Theorem 2.3.6 in \cite{DemboZeitouni}).

We start with the first result which allows to say that
$$\left(\frac{X_n}{n},\frac{Y_n}{n}\right)\to (\mu_1,\mu_2)\quad (\mbox{as}\ n\to\infty)$$
(note that in particular we have
$$
\frac{\mathbb{E}(X_n)}{n}\to  \mu_1\quad \mbox{and}\quad
\frac{\mathbb{E}(Y_n)}{n}\to  \mu_2\quad (\mbox{as}\ n\to\infty)
$$
by Proposition \ref{mean_var_cov}).

\begin{proposition}\label{prop:LD}
The sequence
$\left\{\left(\frac{X_n}{n},\frac{Y_n}{n}\right):n\geq 1\right\}$
satisfies the LDP, with speed $v_n=n$, and good rate function
$\Lambda^*$ defined by
$$
\Lambda^*(x,y):=\sup_{(\lambda_1,\lambda_2)\in\mathbb{R}^2}\{\lambda_1x+\lambda_2y-\Lambda(\lambda_1,\lambda_2)\},
$$
where
\begin{equation}\label{Lambda}
\Lambda(\lambda_1,\lambda_2)=\frac{1}{2}\log\left(g_0(\lambda_1,\lambda_2)g_1(\lambda_1,\lambda_2)\right),
\end{equation}
and
\begin{eqnarray}\label{g_i}
g_i(\lambda_1,\lambda_2)&=&q_{i,0}+q_{i,1}e^{(-1)^i \sqrt{3}a\left(-\frac{\sqrt{3}}{2}\lambda_1+\frac{1}{2}\lambda_2\right)}\nonumber\\
&&+q_{i,2}e^{(-1)^{i+1}\sqrt{3}a\left(\frac{\sqrt{3}}{2}\lambda_1+\frac{1}{2}\lambda_2\right)},\qquad
i=0,1.
\end{eqnarray}
\end{proposition}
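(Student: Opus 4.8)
The plan is to obtain the large deviation principle as a direct application of the G\"artner--Ellis Theorem (Theorem 2.3.6 in \cite{DemboZeitouni}), so the whole argument reduces to identifying the limiting scaled cumulant generating function
$$
\Lambda(\lambda_1,\lambda_2)=\lim_{n\to\infty}\frac1n\log\mathbb{E}\!\left[e^{\lambda_1 X_n+\lambda_2 Y_n}\right]
$$
and checking that it satisfies the hypotheses of that theorem. The crucial observation is that this moment generating function is nothing but the probability generating function computed in Proposition \ref{prop:1}, evaluated at $u=e^{\lambda_1}$ and $v=e^{\lambda_2}$; that is, $\mathbb{E}[e^{\lambda_1 X_n+\lambda_2 Y_n}]=G(e^{\lambda_1},e^{\lambda_2};n)$. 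Hence no new transient analysis is needed: I would simply feed the closed form (\ref{prob_gen_func})--(\ref{F_i_expl}) into the definition of $\Lambda$.

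First I would perform the substitution $u=e^{\lambda_1}$, $v=e^{\lambda_2}$ in (\ref{u_v_tilde}), so that $\tilde u=e^{(3a/2)\lambda_1+(\sqrt3 a/2)\lambda_2}$ and $\tilde v=e^{\sqrt3 a\lambda_2}$, and then rewrite the two factors $\alpha(\tilde u,\tilde v)$ and $\beta(\tilde u,\tilde v)$ of (\ref{eq:alfabeta}). A short simplification of the exponents shows that $\beta(\tilde u,\tilde v)=g_0(\lambda_1,\lambda_2)$ and $\alpha(\tilde u,\tilde v)=g_1(\lambda_1,\lambda_2)$, with $g_0,g_1$ exactly as in (\ref{g_i}). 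Consequently, recalling (\ref{prob_gen_func}) and (\ref{F_i_expl}),
$$
G(e^{\lambda_1},e^{\lambda_2};n)=g_0(\lambda_1,\lambda_2)^{\frac n2+\frac12 i_n}\,g_1(\lambda_1,\lambda_2)^{\frac n2-\frac12 i_n}\,e^{a\,i_n\lambda_1},
$$
with $i_n$ given by (\ref{eq:i_n}).

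Next I would take $\frac1n\log(\cdot)$ of the last display and let $n\to\infty$. Since $i_n\in\{0,1\}$ is bounded and, as noted below, $g_0,g_1$ are strictly positive and finite, the three $i_n$-dependent contributions (the two half-exponent corrections and the term $a\,i_n\lambda_1$) are all $O(1/n)$ and therefore vanish in the limit; what survives is $\frac12\log g_0+\frac12\log g_1=\Lambda(\lambda_1,\lambda_2)$, in agreement with (\ref{Lambda}). This yields the existence of the limit for every $(\lambda_1,\lambda_2)\in\mathbb{R}^2$.

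Finally I would verify the hypotheses of G\"artner--Ellis. Because each $q_{i,r}\geq 0$ with $\sum_r q_{i,r}=1$ and the exponentials are strictly positive, $g_0$ and $g_1$ are strictly positive and finite for all real $\lambda_1,\lambda_2$; hence $\Lambda$ is finite and in fact $C^\infty$ on the whole of $\mathbb{R}^2$. The effective domain $\{(\lambda_1,\lambda_2):\Lambda<\infty\}=\mathbb{R}^2$ is therefore open, and differentiability throughout an open domain with empty boundary makes $\Lambda$ essentially smooth (the steepness condition is vacuous). G\"artner--Ellis then applies and gives the LDP with speed $v_n=n$ and good rate function equal to the Legendre--Fenchel transform $\Lambda^*$, as stated. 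I do not expect a genuine obstacle here: the only point requiring care is the essential-smoothness/steepness condition, but it is immediate once one observes that $\Lambda$ is finite and smooth on all of $\mathbb{R}^2$; the remaining work is the routine exponent bookkeeping identifying $\alpha,\beta$ with $g_1,g_0$.
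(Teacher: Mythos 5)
Your proposal is correct and follows essentially the same route as the paper: both identify $\mathbb{E}[e^{\lambda_1 X_n+\lambda_2 Y_n}]$ with $G(e^{\lambda_1},e^{\lambda_2};n)$ from Proposition \ref{prop:1}, observe that $\alpha,\beta$ evaluated at $(\tilde u,\tilde v)$ become $g_1,g_0$, show that the $i_n$-dependent corrections are $O(1/n)$ so that $\frac1n\log G\to\Lambda$, and conclude via the G\"artner--Ellis Theorem using finiteness and differentiability of $\Lambda$ on all of $\mathbb{R}^2$. Your explicit remark that the effective domain is all of $\mathbb{R}^2$, making steepness vacuous, is a slightly more careful justification of the G\"artner--Ellis hypotheses than the paper spells out, but it is the same argument.
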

\begin{proof}
We have to check that
$$
\lim_{n\rightarrow+\infty}\frac{1}{n}\log
G(e^{\lambda_1},e^{\lambda_2};n)=\Lambda(\lambda_1,\lambda_2)\
(\mbox{for all}\ (\lambda_1,\lambda_2)\in\mathbb{R}^2),
$$
where $\Lambda$ is the function in (\ref{Lambda}). Then the LDP
will follow from a straightforward application of G\"{a}rtner
Ellis Theorem because the function $\Lambda$ is finite and
differentiable. In order to do that we remark that
$$
G(e^{\lambda_1},e^{\lambda_2};n)=\left\{
\begin{array}{ll}
F_0(e^{\sqrt{3}a\left(\frac{\sqrt{3}}{2}\lambda_1+\frac{1}{2}\lambda_2\right)},e^{\lambda_2
\sqrt{3}a};n),
& \; n \; \textrm{even}\\
&\\
e^{\lambda_1 a}
F_1(e^{\sqrt{3}a\left(\frac{\sqrt{3}}{2}\lambda_1+\frac{1}{2}\lambda_2\right)},e^{\lambda_2
\sqrt{3}a};n), & \; n\; \textrm{odd} 
\end{array}
\right.
$$
by (\ref{prob_gen_func}). Then, by recalling the expression of
$F_i$ in (\ref{F_i_expl}), one has
$$
\frac{1}{n} \log G(e^{\lambda_1},e^{\lambda_2};n)=\left\{
\begin{array}{ll}
\frac{1}{2}\log[g_0(\lambda_1,\lambda_2)g_1(\lambda_1,\lambda_2)],& \textrm{$n$  even}\\
&\\
\frac{1}{n}\lambda_1 a+\frac{1}{2} \log [g_0(\lambda_1,\lambda_2)g_1(\lambda_1,\lambda_2)]&\\
+\frac{1}{n} \log
[g_1^{-1}(\lambda_1,\lambda_2)g_0(\lambda_1,\lambda_2)]^{1/2}, &
\; \textrm{$n$  odd},
\end{array}
\right.
$$
from which the theorem follows by taking the limit for
$n\rightarrow +\infty$.
\end{proof}
%

%
\begin{remark}\label{rem:Lambda-for-a-particular-case}
From (\ref{Lambda}) and (\ref{g_i}), if $q_{i,r}=\frac{1}{3}$ for
all $i=0,1$ and $r=0,1,2$, it results
\begin{eqnarray}
\Lambda(\lambda_1,\lambda_2)&=&
\frac{1}{2}\log \left(\frac{1}{3}+\frac{2}{9}\left\{\cosh\left[\sqrt{3}a\left(\frac{\sqrt{3}}{2}\lambda_1-\frac{1}{2}\lambda_2\right)\right]\right.\right.\nonumber \\
&&\left.\left.+\cosh\left[\sqrt{3}a\left(\frac{\sqrt{3}}{2}\lambda_1+\frac{1}{2}\lambda_2\right)\right]+\cosh (\lambda_2 \sqrt{3}a) \right\}\right).\nonumber
\end{eqnarray}
\end{remark}
\par
Hereafter we remark how the results specified in Proposition \ref{prop:LD}  allow to evaluate some quantities of 
interest in applied contexts, such as estimates of  hitting probabilities that are relevant for simulation purposes.
\begin{remark}\label{rem:Collamore}
	The LDP in Proposition \ref{prop:LD} can be used
		to obtain asymptotic results, as $\varepsilon\to 0$, for some hitting probabilities
		$$\Psi_\varepsilon(A):=P((X_n,Y_n)\in A/\varepsilon\ \mbox{for some}\ n)$$
		(see e.g.\ \cite{Collamore2002}) and for some first passage times
		$$T_\varepsilon(A):=\varepsilon\inf\{n:(X_n,Y_n)\in A/\varepsilon\}$$
		(see e.g.\ \cite{Collamore1998}); here $A$ belongs to a class of subsets of 
		$\mathbb{R}^2$ such that the set $A/\varepsilon$ drifts away to infinity 
		(as $\varepsilon\to 0$), and takes an opposite direction with respect to the vector 
		$(\mu_1,\mu_2)$ (assumed to be different from the null vector). The results in
		\cite{Collamore2002} provide an estimate of $\Psi_\varepsilon(A)$ by Monte Carlo
		simulation using the importance sampling technique.    
\end{remark}
\par
The next proposition  provides a class
of LDPs for centered random variables; specifically we  
consider every sequence of positive numbers
$\{a_n:n\geq 1\}$ such that (\ref{eq:MD-conditions}) holds.
The term used for this class of LDPs is \emph{moderate deviations}.
In some sense we fill the gap between the following two cases:
\begin{itemize}
\item the convergence to $(0,0)$ of 
$\frac{S_n-m_n}{n}=\left(\frac{X_n-\mathbb{E}(X_n)}{n},\frac{Y_n-\mathbb{E}(Y_n)}{n}\right)$;
\item the asymptotic normality result in Proposition \ref{Prop:TCL} above,
(see also Remark \ref{rem:typical-remark-MD} below).
\end{itemize}
Note that, these two cases can be obtained in the following proposition by 
setting $a_n=1/n$ and $a_n=1$, respectively. In both cases only one
condition in (\ref{eq:MD-conditions}) is satisfied. 
\begin{proposition}\label{prop:MD}
For all sequences of positive numbers $\{a_n:n\geq 1\}$ such that
(\ref{eq:MD-conditions}) holds, the sequence
$\left\{\sqrt{na_n}\left(\frac{X_n-\mathbb{E}(X_n)}{n},\frac{Y_n-\mathbb{E}(Y_n)}{n}\right):n\geq
1\right\}$ satisfies the LDP, with speed $1/a_n$, and good rate
function $\tilde{\Lambda}^*$ defined by
\begin{equation}\label{Lambda-tilde-star}
\tilde{\Lambda}^*(x,y):=\sup_{(\lambda_1,\lambda_2)\in\mathbb{R}^2}\{\lambda_1x+\lambda_2y-\tilde{\Lambda}(\lambda_1,\lambda_2)\},
\end{equation}
where
\begin{equation}\label{Lambda-tilde}
\tilde{\Lambda}(\lambda_1,\lambda_2):=\frac{1}{2}(\lambda_1,\lambda_2)C\binom{\lambda_1}{\lambda_2}
\end{equation}
and $C$ is the matrix defined by (\ref{eq:matrxC}), whose elements
are expressed in Proposition \ref{mean_var_cov}.
\end{proposition}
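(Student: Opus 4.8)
The plan is to apply the G\"artner--Ellis theorem once more, this time with speed $v_n=1/a_n$. Writing $Z_n:=\sqrt{na_n}\left(\frac{X_n-\mathbb{E}(X_n)}{n},\frac{Y_n-\mathbb{E}(Y_n)}{n}\right)$, I must establish that for every $(\lambda_1,\lambda_2)\in\mathbb{R}^2$ the scaled cumulant generating function converges,
$$
\lim_{n\to\infty}a_n\log\mathbb{E}\left[\exp\left(\tfrac{1}{a_n}\langle(\lambda_1,\lambda_2),Z_n\rangle\right)\right]=\tilde{\Lambda}(\lambda_1,\lambda_2),
$$
with $\tilde{\Lambda}$ as in (\ref{Lambda-tilde}). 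A direct simplification gives $\tfrac1{a_n}\langle(\lambda_1,\lambda_2),Z_n\rangle=\tfrac{1}{\sqrt{na_n}}\big(\lambda_1(X_n-\mathbb{E}(X_n))+\lambda_2(Y_n-\mathbb{E}(Y_n))\big)$, so that, using the moment generating function $\mathbb{E}[e^{sX_n+tY_n}]=G(e^s,e^t;n)$ furnished by Proposition \ref{prop:1}, the quantity to analyze is
$$
a_n\left[\log G\big(e^{\lambda_1/\sqrt{na_n}},e^{\lambda_2/\sqrt{na_n}};n\big)-\tfrac{\lambda_1\mathbb{E}(X_n)+\lambda_2\mathbb{E}(Y_n)}{\sqrt{na_n}}\right].
$$

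The key input is the exact expression for $\log G$ isolated in the proof of Proposition \ref{prop:LD}: setting $s=\lambda_1/\sqrt{na_n}$ and $t=\lambda_2/\sqrt{na_n}$, it equals $n\Lambda(s,t)$ for $n$ even, with $\Lambda$ as in (\ref{Lambda}), and carries only the bounded parity corrections $sa+\tfrac12\log\big(g_0(s,t)/g_1(s,t)\big)$ for $n$ odd. Since $g_i(0,0)=q_{i,0}+q_{i,1}+q_{i,2}=1$ by (\ref{g_i}), one has $\Lambda(0,0)=0$; moreover $\nabla\Lambda(0,0)=(\mu_1,\mu_2)$ and, crucially, the Hessian $\nabla^2\Lambda(0,0)$ equals the covariance matrix $C$ of (\ref{eq:matrxC}). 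This is the point where the moments of Proposition \ref{mean_var_cov} re-enter, and it is consistent with the central-limit scaling of Proposition \ref{Prop:TCL}. Because (\ref{eq:MD-conditions}) forces $(s,t)\to(0,0)$, I would Taylor-expand
$$
\Lambda(s,t)=(\mu_1 s+\mu_2 t)+\tfrac12(s,t)\,C\,(s,t)^{\top}+o\!\left(s^2+t^2\right),
$$
multiply by $a_n n$, and match terms. The drift part $a_n n(\mu_1 s+\mu_2 t)=\sqrt{na_n}(\mu_1\lambda_1+\mu_2\lambda_2)$ cancels exactly the $n$-proportional part of the centering term $a_n\tfrac{\lambda_1\mathbb{E}(X_n)+\lambda_2\mathbb{E}(Y_n)}{\sqrt{na_n}}$, while the quadratic part collapses to $a_n n\cdot\tfrac{1}{2na_n}(\lambda_1,\lambda_2)C(\lambda_1,\lambda_2)^{\top}=\tilde{\Lambda}(\lambda_1,\lambda_2)$, precisely the claimed limit.

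It then remains to verify that the leftover contributions are asymptotically negligible: the bounded quantity $i_n$ inside $\mathbb{E}(X_n),\mathbb{E}(Y_n)$ yields a factor of order $a_n/\sqrt{na_n}=\sqrt{a_n/n}\to0$; the odd-$n$ parity corrections are $O(s)=O(1/\sqrt{na_n})$ and vanish once multiplied by $a_n$; and the Taylor remainder contributes $a_n n\,o(s^2+t^2)=o(1)$ under (\ref{eq:MD-conditions}). Having identified the limiting log-moment generating function as the everywhere-finite, differentiable (indeed quadratic) function $\tilde{\Lambda}$, the G\"artner--Ellis theorem delivers the LDP with speed $1/a_n$ and good rate function $\tilde{\Lambda}^*$ of (\ref{Lambda-tilde-star}). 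The delicate step, which I would treat most carefully, is the identification $\nabla^2\Lambda(0,0)=C$ together with a uniform control of the Taylor remainder along the shrinking arguments $(s,t)$; this is legitimate because $\Lambda$ is analytic near the origin by (\ref{Lambda})--(\ref{g_i}), so the second-order expansion with $o(s^2+t^2)$ remainder genuinely vanishes after amplification by $a_n n$, rather than leaving a spurious constant.
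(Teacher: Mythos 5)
Your proposal is correct: the key limit you verify is exactly the one the paper checks, namely that $a_n\bigl[\log G(e^{\lambda_1/\sqrt{na_n}},e^{\lambda_2/\sqrt{na_n}};n)-\frac{\lambda_1\mathbb{E}(X_n)+\lambda_2\mathbb{E}(Y_n)}{\sqrt{na_n}}\bigr]\to\tilde{\Lambda}(\lambda_1,\lambda_2)$, after which G\"artner--Ellis applies since $\tilde{\Lambda}$ is finite and differentiable. However, you organize the expansion differently from the paper. The paper writes a second-order Maclaurin formula for the $n$-dependent function $\Psi_n(\lambda_1,\lambda_2)=\log G(e^{\lambda_1/\sqrt{na_n}},e^{\lambda_2/\sqrt{na_n}};n)$, whose first- and second-order coefficients are the exact moments $\mathbb{E}(X_n),\mathbb{E}(Y_n),Var(X_n),Var(Y_n),Cov(X_n,Y_n)$; the centering kills the first-order part identically, and the quadratic part converges because $Var(X_n)/n\to\sigma_1^2$, $Var(Y_n)/n\to\sigma_2^2$, $Cov(X_n,Y_n)/n\to\sigma_{1,2}$ by Proposition \ref{mean_var_cov}. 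You instead exploit the exact identity $\log G(e^s,e^t;n)=n\Lambda(s,t)+(\mbox{parity corrections})$ coming from the proof of Proposition \ref{prop:LD}, and Taylor-expand the \emph{fixed} analytic function $\Lambda$ at the origin. Your route buys cleaner remainder control (the $o(s^2+t^2)$ is the standard Taylor remainder of a function not depending on $n$, whereas the paper's $o\bigl(\frac{1}{na_n}\bigr)$ is a remainder for an $n$-dependent expansion and its uniformity is left implicit), and it makes transparent the structural fact that the moderate-deviation rate is the quadratic approximation of the large-deviation machinery at its minimum. The price is that you must actually prove the identifications $\nabla\Lambda(0,0)=(\mu_1,\mu_2)$ and $\nabla^2\Lambda(0,0)=C$, which you assert and flag as ``the delicate step'' but do not carry out; the paper's arrangement gets these for free because its expansion coefficients \emph{are} the moments. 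To close this, the quickest argument is to avoid differentiating (\ref{Lambda})--(\ref{g_i}) by hand: for even $n$ one has $\log G(e^s,e^t;n)=n\Lambda(s,t)$ exactly, and $\log G$ is the cumulant generating function of $(X_n,Y_n)$, so differentiating twice at $(0,0)$ and using $i_n=0$ in Proposition \ref{mean_var_cov} yields $n\,\nabla^2\Lambda(0,0)=\mathrm{Cov\ matrix\ of\ }(X_n,Y_n)=nC$, and likewise $n\,\nabla\Lambda(0,0)=(n\mu_1,n\mu_2)$. With that single addition your proof is complete.
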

\begin{proof}
We have to check that
$$
\lim_{n\rightarrow+\infty}\tilde{\Lambda}_n(\lambda_1,\lambda_2)=\tilde{\Lambda}(\lambda_1,\lambda_2)\
(\mbox{for all}\ (\lambda_1,\lambda_2)\in\mathbb{R}^2),
$$
where
$$\tilde{\Lambda}_n(\lambda_1,\lambda_2):=a_n\left(\log
G(e^{\lambda_1/\sqrt{na_n}},e^{\lambda_2/\sqrt{na_n}};n)-\frac{\lambda_1\mathbb{E}(X_n)+\lambda_2\mathbb{E}(Y_n)}{\sqrt{na_n}}\right)$$
and $\tilde{\Lambda}$ is the function in (\ref{Lambda-tilde}).
Then the LDP will follow from a straightforward application of
G\"{a}rtner Ellis Theorem because the function $\Lambda$ is finite
and differentiable. In order to do that, by taking into account
$na_n\to\infty$ (by (\ref{eq:MD-conditions})), we consider the Mac
Laurin formula of order 2 of
$$
\Psi_n(\lambda_1,\lambda_2):=\log
G\left(e^{\lambda_1/\sqrt{na_n}},e^{\lambda_2/\sqrt{na_n}};n\right)
$$
namely
\begin{multline*}
\Psi_n(\lambda_1,\lambda_2)=\frac{\lambda_1\mathbb{E}(X_n)+\lambda_2\mathbb{E}(Y_n)}{\sqrt{na_n}} \\
+\frac{1}{2}\left(\frac{\lambda_1^2}{na_n}Var(X_n)+\frac{\lambda_2^2}{na_n}Var(Y_n)+2\frac{\lambda_1\lambda_2}{na_n}Cov(X_n,Y_n)
+o\left(\frac{1}{na_n}\right)\right).
\end{multline*}
Then we obtain
\begin{multline*}
\tilde{\Lambda}_n(\lambda_1,\lambda_2)=a_n\left(\frac{1}{2}\left(\frac{\lambda_1^2}{na_n}Var(X_n)+\frac{\lambda_2^2}{na_n}Var(Y_n)\right.\right.\\
\left.\left.+2\frac{\lambda_1\lambda_2}{na_n}Cov(X_n,Y_n)+o\left(\frac{1}{na_n}\right)\right)\right)\\
=\frac{1}{2}\left(\frac{\lambda_1^2}{n}Var(X_n)+\frac{\lambda_2^2}{n}Var(Y_n)+2\frac{\lambda_1\lambda_2}{n}Cov(X_n,Y_n)+o\left(\frac{1}{n}\right)\right).
\end{multline*}
We conclude noting that the desired limit holds because
$$
 \frac{Var(X_n)}{n}\to  \sigma_1^2,\ \frac{Var(Y_n)}{n}\to  \sigma_2^2,\ \frac{Cov(X_n,Y_n)}{n}\to  \sigma_{1,2}\ (\mbox{as}\ n\to\infty)
$$
by Proposition \ref{mean_var_cov}, and by taking into account the
function $\tilde{\Lambda}$ in (\ref{Lambda-tilde}).
\end{proof}
\begin{remark}\label{rem:invertible-C-for-MD}
If $C$ is invertible, then the supremum in
(\ref{Lambda-tilde-star}) is attained at
$\binom{\lambda_1}{\lambda_2}=C^{-1}\binom{x}{y}$ and we get
$$
 \tilde{\Lambda}^*(x,y)=\frac{1}{2}(x,y)C^{-1}\binom{x}{y}\ (\mbox{for all}\ (x,y)\in\mathbb{R}^2).
$$
On the other hand, if $C$ is not invertible, we can
consider the restriction $\hat{C}$, say, of $C$ to
its range $\mathrm{Im}(C)$; then it is invertible with
inverse $\hat{C}^{-1}$ and thus we have
$$
\tilde{\Lambda}^*(x,y)=\left\{
\begin{array}{ll}
\frac{1}{2}(x,y)\hat{C}^{-1}\binom{x}{y},&\ \mbox{if}\ 
(x,y)\in\mathrm{Im}(C)\\
\infty,&\ \mbox{otherwise}.
\end{array}\right.
$$
\end{remark}
\begin{remark}\label{rem:typical-remark-MD}
A close inspection of the proof of Proposition \ref{prop:MD} reveals that 
all the computations for moderate deviations  work well even if $a_n=1$; note that in 
such a case the first condition in (\ref{eq:MD-conditions}) fails. Thus, for all
$(\lambda_1,\lambda_2)\in\mathbb{R}^2$,
$$
 \lim_{n\to\infty}\log \mathbb{E}\left[\exp\left(\lambda_1\frac{X_n-\mathbb{E}(X_n)}{\sqrt{n}}+\lambda_2\frac{Y_n-\mathbb{E}(Y_n)}{\sqrt{n}}\right)\right]
 =\frac{1}{2}(\lambda_1,\lambda_2)C\binom{\lambda_1}{\lambda_2},
$$
and therefore 
$\left\{\left(\frac{X_n-\mathbb{E}(X_n)}{\sqrt{n}},\frac{Y_n-\mathbb{E}(Y_n)}{\sqrt{n}}\right):n\geq 1\right\}$
converges weakly (as $n\to\infty$) to the centered bivariate Normal distribution 
with covariance matrix $C$ (cfr.\ Proposition \ref{Prop:TCL}).
\end{remark}

%
\section{A First-Passage-Time Problem}
%
In this section we study the first-passage-time problem of the random walk $(X_n, Y_n)$ 
through suitable boundaries, under specific symmetry conditions. 
\par
In order to deal with more general instances, hereafter we assume that the initial state is arbitrary, and 
belonging to a vertex of $\mathscr{V}_0$. Thus, the initial condition is expressed as 
\begin{equation}
 \mathbb{P}\Big[(X_0,Y_0)
 =\Big(\frac{3}{2}a \tilde j, \frac{\sqrt{3}}{2}a \tilde j+\sqrt{3} a \tilde k\Big)\Big]=1,\qquad 
 \tilde j, \tilde k \in \mathbb{Z}. 
 \label{eq:newinitstate}
\end{equation}
In this section, 
recalling the initial condition (\ref{eq:newinitstate}), we denote by $\mathbb{P}_0(A)$ the probability of 
$A$ conditional on $(X_0,Y_0) =\Big(\frac{3}{2}a \tilde j, \frac{\sqrt{3}}{2}a \tilde j+\sqrt{3} a \tilde k\Big)$. 
Moreover, we adopt the following notation for the state probabilities at time $n\in\mathbb{N}_0$:
\begin{equation}
 p_{j,k}(n|\tilde j, \tilde k):=\mathbb{P}_0\Big[(X_{n},Y_{n})=\Big(\frac{3}{2} a j+i_n a,
 \frac{\sqrt{3}}{2} a j +\sqrt{3}a k\Big)\Big], 
 \label{eq:newprobstate}
\end{equation}
for $\tilde j, \tilde k \in \mathbb{Z}$, where $i_n$ is defined by (\ref{eq:i_n}). 
Clearly, due to the spatial homogeneity of random walk, the  state probabilities (\ref{p_j_k}) and 
(\ref{eq:newprobstate}) are related by the following equality 
\begin{equation}\label{simmp_j_k}
 p_{j-\tilde j,k-\tilde k}(n)= p_{j,k}(n|\tilde j, \tilde k),
 \qquad j,k,\tilde j,\tilde k\in \mathbb{Z},\quad n\in \mathbb{N}_0.
\end{equation}
\par
Let us now introduce the straight-line boundary 
\begin{equation}\label{eq:boundary}
 {\cal B}_r=\Big\{\Big(\frac{3}{2}a r , \frac{\sqrt{3}}{2}a r+\sqrt{3} a k\Big); k\in \mathbb{Z}\Big\},
 \qquad r\in \mathbb Z. 
\end{equation}
Clearly, ${\cal B}_r$ is constituted by vertices of $\mathscr{V}_0$. 
For a fixed $r\in\mathbb{Z}$, and $\tilde j,\tilde k\in \mathbb{Z}$ let
\begin{equation}\label{eq:Trjk}
T_{r}(\tilde j,\tilde k)=\min\{n\in \mathbb{N} :  (X_n, Y_n)\in {\cal B}_r  \}, 
 \qquad (X_0, Y_0)=\Big(\frac{3}{2}a \tilde j, \frac{\sqrt{3}}{2}a \tilde j+\sqrt{3} a \tilde k\Big)
 \not \in {\cal B}_r
\end{equation}
be the first-passage time of $(X_n, Y_n)$ through the boundary (\ref{eq:boundary}). 
Since the initial state belongs to $\mathscr{V}_0$, then the first passage through ${\cal B}_r$ 
may occur only at even times. 
Let $r,s,\tilde j,\tilde k\in \mathbb{Z}$, and $n\in \mathbb{N}$;
we now introduce the first-passage-time probabilities 
\begin{equation}
 g(r,s; n|\tilde j,\tilde k)
 =\mathbb{P}_0\Big[ T_{r}(\tilde j,\tilde k)=n, 
 (X_{n},Y_{n})=\Big(\frac{3}{2} a r ,
 \frac{\sqrt{3}}{2} a r +\sqrt{3}a s\Big)\Big], 
 \label{eq:probg}
\end{equation}
and 
\begin{equation}
 h(r; n|\tilde j,\tilde k)
 =\mathbb{P}_0\Big[ T_{r}(\tilde j,\tilde k)=n \Big].
\label{eq:probh}
\end{equation}
Hence, from (\ref{eq:probg}) and (\ref{eq:probh}) we have 
\begin{equation}
 h(r; n|\tilde j,\tilde k)
 =\sum_{s\in \mathbb Z}
 g(r,s; n|\tilde j,\tilde k).
\label{eq:relprobhg}
\end{equation}
Recalling that the first passage through ${\cal B}_r$ eventually occurs at even times, it follows
$$
  g(r,s; 2n-1|\tilde j,\tilde k)= 0 
  \qquad \hbox{and} 
  \qquad h(r; 2n-1|\tilde j,\tilde k)=0, \qquad n\in \mathbb{N}.
$$ 
Morever, the nature of the sample-paths and the Markov property 
of $(X_n, Y_n)$ allow to write the following relation, for $n\in \mathbb N$, 
\begin{equation}
 p_{j,k}(n\,|\,\tilde j, \tilde k)
 =\sum_{i=1}^n \sum_{s\in \mathbb Z} g(r,s; i \,|\,\tilde j,\tilde k) p_{j,k}(n-i \,|\, r,s)
 \qquad (j\leq r, \tilde j>r) \ \ \hbox{or} \ \ (j\geq r, \tilde j<r).
\label{eq:relprobhgnew}
\end{equation}
%
%
We remark that Eq.\  (\ref{eq:relprobhgnew}) expresses that all sample paths that start on the right (left) of the 
boundary ${\cal B}_r$ and terminate at time $n$ on the  left (right) are forced to cross ${\cal B}_r$ in some point, say $(r,s)$, 
at some instant, say $i$, with $1\leq i\leq n$. Hence, in this case the first passage through ${\cal B}_r$ is certain.  
\par
As for illustration of Eq.\ (\ref{eq:relprobhgnew}), Figure \ref{fig:samplepath} shows the projection 
on the hexagonal lattice of a sample path that crosses the boundary ${\cal B}_r$. 
\begin{figure}[ht] 
\begin{center}
\begin{picture}(341,221)
\put(50,15){\circle{5}}
\put(130,15){\circle{5}}
\put(210,15){\circle{5}}
\put(290,15){\circle{5}}
\put(75,15){\circle*{5}}
\put(155,15){\circle*{5}}
\put(235,15){\circle*{5}}
\put(315,15){\circle*{5}}
\put(50,60){\circle{5}}
\put(130,60){\circle{5}}
\put(210,60){\circle{5}}
\put(290,60){\circle{5}}
\put(75,60){\circle*{5}}
\put(155,60){\circle*{5}}
\put(235,60){\circle*{5}}
\put(315,60){\circle*{5}}
\put(50,105){\circle{5}}
\put(130,105){\circle{5}}
\put(210,105){\circle{5}}
\put(290,105){\circle{5}}
\put(75,105){\circle*{5}}
\put(155,105){\circle*{5}}
\put(235,105){\circle*{5}}
\put(315,105){\circle*{5}}
\put(50,150){\circle{5}}
\put(130,150){\circle{5}}
\put(210,150){\circle{5}}
\put(290,150){\circle{5}}
\put(75,150){\circle*{5}}
\put(155,150){\circle*{5}}
\put(235,150){\circle*{5}}
\put(315,150){\circle*{5}}
\put(90,37.5){\circle{5}}
\put(170,37.5){\circle{5}}
\put(250,37.5){\circle{5}}
\put(330,37.5){\circle{5}}
\put(35,37.5){\circle*{5}}
\put(115,37.5){\circle*{5}}
\put(195,37.5){\circle*{5}}
\put(275,37.5){\circle*{5}}
\put(90,82.5){\circle{5}}
\put(170,82.5){\circle{5}}
\put(250,82.5){\circle{5}}
\put(330,82.5){\circle{5}}
\put(35,82.5){\circle*{5}}
\put(115,82.5){\circle*{5}}
\put(195,82.5){\circle*{5}}
\put(275,82.5){\circle*{5}}
\put(90,127.5){\circle{5}}
\put(170,127.5){\circle{5}}
\put(250,127.5){\circle{5}}
\put(330,127.5){\circle{5}}
\put(35,127.5){\circle*{5}}
\put(115,127.5){\circle*{5}}
\put(195,127.5){\circle*{5}}
\put(275,127.5){\circle*{5}}

\put(90,172.5){\circle{5}}
\put(170,172.5){\circle{5}}
\put(250,172.5){\circle{5}}
\put(115,172.5){\circle*{5}}
\put(195,172.5){\circle*{5}}
\put(275,172.5){\circle*{5}}
%
\put(250,-2.5){\line(0,1){5}}
\put(250,7.5){\line(0,1){5}}
\put(250,17.5){\line(0,1){5}}
\put(250,27.5){\line(0,1){5}}
\put(250,42.5){\line(0,1){5}}
\put(250,52.5){\line(0,1){5}}
\put(250,62.5){\line(0,1){5}}
\put(250,72.5){\line(0,1){5}}
\put(250,87.5){\line(0,1){5}}
\put(250,97.5){\line(0,1){5}}
\put(250,107.5){\line(0,1){5}}
\put(250,117.5){\line(0,1){5}}
\put(250,132.5){\line(0,1){5}}
\put(250,142.5){\line(0,1){5}}
\put(250,152.5){\line(0,1){5}}
\put(250,162.5){\line(0,1){5}}
\put(250,177.5){\line(0,1){5}}
\put(250,187.5){\line(0,1){5}}
%
\gra{
\put(50,60){\line(1,0){25}}
\put(50,60){\line(-2,3){15}}
\put(50,60){\line(-2,-3){15}}
\put(130,60){\line(1,0){25}}
\put(130,60){\line(-2,3){15}}
\put(130,60){\line(-2,-3){15}}
\put(210,60){\line(1,0){25}}
\put(210,60){\line(-2,3){15}}
\put(210,60){\line(-2,-3){15}}
\put(50, 105){\line(1,0){25}}
\put(50, 105){\line(-2,3){15}}
\put(50, 105){\line(-2,-3){15}}
\put(130, 105){\line(1,0){25}}
\put(130, 105){\line(-2,3){15}}
\put(130, 105){\line(-2,-3){15}}
\put(210, 105){\line(1,0){25}}
\put(210, 105){\line(-2,3){15}}
\put(210, 105){\line(-2,-3){15}}
\put(90, 82.5){\line(1,0){25}}
\put(90, 82.5){\line(-2,3){15}}
\put(90, 82.5){\line(-2,-3){15}}
\put(170, 82.5){\line(1,0){25}}
\put(170, 82.5){\line(-2,3){15}}
\put(170, 82.5){\line(-2,-3){15}}
\put(250, 82.5){\line(1,0){25}}
\put(250, 82.5){\line(-2,3){15}}
\put(250, 82.5){\line(-2,-3){15}}
\put(130, 150){\line(1,0){25}}
\put(130, 150){\line(-2,3){15}}
\put(130, 150){\line(-2,-3){15}}
\put(210, 150){\line(1,0){25}}
\put(210, 150){\line(-2,3){15}}
\put(210, 150){\line(-2,-3){15}}
\put(290, 150){\line(-2,3){15}}
\put(290, 150){\line(-2,-3){15}}
\put(90, 127.5){\line(1,0){25}}
\put(90, 127.5){\line(-2,3){15}}
\put(90, 127.5){\line(-2,-3){15}}
\put(170, 127.5){\line(1,0){25}}
\put(170, 127.5){\line(-2,3){15}}
\put(170, 127.5){\line(-2,-3){15}}
\put(250, 127.5){\line(1,0){25}}
\put(250, 127.5){\line(-2,3){15}}
\put(250, 127.5){\line(-2,-3){15}}
\put(35, 127.5){\line(2,3){15}}
\put(50, 150){\line(1,0){25}}
\put(75, 150){\line(2,3){15}}
\put(90, 172.5){\line(1,0){25}}
\put(155, 150){\line(2,3){15}}
\put(170, 172.5){\line(1,0){25}}
\put(235, 150){\line(2,3){15}}
\put(250, 172.5){\line(1,0){25}}
\put(275, 127.5){\line(2,-3){15}}
\put(275, 82.5){\line(2,3){15}}
\put(35, 37.5){\line(2,-3){15}}
\put(50, 15){\line(1,0){25}}
\put(75, 15){\line(2,3){15}}
\put(75, 15){\line(2,3){15}}
\put(90, 37.5){\line(-2,3){15}}
\put(90, 37.5){\line(1,0){25}}
\put(115, 37.5){\line(2,-3){15}}
\put(130, 15){\line(1,0){25}}
\put(155, 15){\line(2,3){15}}
\put(170, 37.5){\line(-2,3){15}}
\put(170, 37.5){\line(1,0){25}}
\put(195, 37.5){\line(2,-3){15}}
\put(210, 15){\line(1,0){25}}
\put(235, 15){\line(2,3){15}}
\put(250, 37.5){\line(-2,3){15}}
\put(250, 37.5){\line(1,0){25}}
\put(275, 37.5){\line(2,3){15}}
\put(275, 37.5){\line(2,3){15}}
\put(290, 60){\line(-2,3){15}}
\put(290, 15){\line(1,0){25}}
\put(290, 60){\line(1,0){25}}
\put(290, 105){\line(1,0){25}}
\put(290, 150){\line(1,0){25}}
\put(290, 15){\line(-2,3){15}}
\put(315, 15){\line(2,3){15}}
\put(315, 60){\line(2,3){15}}
\put(315, 105){\line(2,3){15}}
\put(315, 60){\line(2,-3){15}}
\put(315, 105){\line(2,-3){15}}
\put(315, 150){\line(2,-3){15}}
}
\put(130, 60){\line(1,0){25}}
\put(155, 60){\line(2,3){15}}
\put(170, 82.5){\line(1,0){25}}
\put(195, 82.5){\line(2,3){15}}
\put(210, 105){\line(1,0){25}}
\put(235, 105){\line(2,-3){15}}
\put(250, 82.5){\line(1,0){25}}
\put(275, 82.5){\line(2,-3){15}}
%
%
\put(225,195){\makebox(50,15)[t]{${\cal B}_r$}}
\put(110,60){\makebox(50,15)[t]{{\small $(\tilde j, \tilde k)$}}}
\put(270,60){\makebox(50,15)[t]{{\small $( j, k)$}}}
\put(234,80){\makebox(50,15)[t]{{\small $( r, s)$}}}
\end{picture}
\end{center}
\caption{ 
Projection on the hexagonal lattice of a sample path of $(X_n, Y_n)$ that crosses the boundary ${\cal B}_r$, 
with $j\geq r, \tilde j<r$. 
}
\label{fig:samplepath}
\end{figure}
\par
For any fixed $r\in\mathbb{Z}$, and for $j, k, \tilde j, \tilde k \in \mathbb{Z}$ let us now introduce the following ``taboo probability'':
\begin{equation}
 p^{\langle r\rangle}_{j,k}(n\,|\,\tilde j, \tilde k)
 =\mathbb{P}_0\Big[ (X_{n},Y_{n})=\Big(\frac{3}{2} a j+i_n a,
 \frac{\sqrt{3}}{2} a j +\sqrt{3}a k\Big), T_{r}(\tilde j,\tilde k)>n \Big], 
 \label{equation:24}
\end{equation}
with $\Big(\frac{3}{2}a \tilde j, \frac{\sqrt{3}}{2}a \tilde j+\sqrt{3} a \tilde k\Big)\not \in {\cal B}_r$,
for  $i_n$  defined by (\ref{eq:i_n}). Note that such taboo probability refers to a transition that ends   
at time $n$ and that avoids the boundary (\ref{eq:boundary}), conditional on the initial state belonging to $\mathscr{V}_0$. 
\par
We are now able to express the taboo probability in terms of transition probabilities, under specific assumptions on the 
one-step transition probabilities.  
\begin{theorem}\label{th:probtaboo}
Let $r\in \mathbb Z$. 
If $q_{0,1}=q_{1,2}$ and $q_{1,1}=q_{0,2}$, then for $n\in \mathbb N$ we have 
\begin{equation}
 p^{\langle r\rangle}_{j,k}(2n\,|\,\tilde j, \tilde k)
 = p_{j,k}(2n\,|\,\tilde j, \tilde k) 
 - p_{2r-j, j-r+k}(2n\,|\,\tilde j, \tilde k), 
\label{equation:11}
\end{equation}
with $j<r$, $\tilde j<r$ or  $j>r$, $\tilde j>r$.
\label{teorema1}
\end{theorem}
\begin{proof}
Noting that $g(r,s; 2 i +1|\tilde j,\tilde k)=0$ for $i\in \mathbb N_0$, 
for $k, \tilde k\in \mathbb Z$, 
the following relation holds true for  $j<r$, $\tilde j<r$ or  $j>r$, $\tilde j>r$: 
\begin{equation}
 p^{\langle r\rangle}_{j,k}(2n\,|\,\tilde j, \tilde k)
 =p_{j,k}(2n\,|\,\tilde j, \tilde k)
 -\sum_{i=1}^{\lfloor  n/2\rfloor} \sum_{s\in\mathbb{Z}} 
  g(r,s; 2 i \,|\,\tilde j,\tilde k) p_{j-r,k-s}(2n-2i \,|\, 0,0). 
\label{equation:11bis}
\end{equation}
Indeed, the sample-paths that avoid the boundary (\ref{eq:boundary}) are obtained by considering all sample-paths 
and excluding those that cross ${\cal B}_r$. 
We take into account that, under the given assumptions, the following relation holds due to case (i)
of Corollary \ref{coroll:symm}:
$$p_{j-r,k-s}(2n-2i | 0,0)=p_{2r-j,j-r+k}(2n-2i | r,s).$$ 
Hence, recalling  (\ref{eq:relprobhgnew}), the last term of (\ref{equation:11bis}) can be recognized to be a 
transition probability. The equation (\ref{equation:11}) thus finally follows. 
\end{proof}
\par
Let us now obtain a closed-form expression of the first-passage-time probabilities (\ref{eq:probg}) 
under the symmetry conditions $q_{0,1}=q_{1,2}$ and $q_{1,1}=q_{0,2}$. 
\begin{theorem}\label{teorema2}
Let $n\in \mathbb{N}_0$ and $s, \tilde k \in \mathbb{Z}$. 
Under the assumption of Theorem \ref{th:probtaboo}, for  $\tilde j<r$ we have 
\begin{eqnarray}
  g(r,s; 2n+2|\tilde j,\tilde k) \!\!\!\!
  & = & \!\!\!\! q_{0,0} \left\{q_{1,2} \left[ p_{r-1,s}(2n\,|\,\tilde j, \tilde k) - p_{r+1,s-1}(2n\,|\,\tilde j, \tilde k)  \right] \right.
  \nonumber \\
 & + & \!\!\!\! \left. q_{1,1}  \left[ p_{r-1,s+1}(2n\,|\,\tilde j, \tilde k) - p_{r+1,s}(2n\,|\,\tilde j, \tilde k)  \right]\right\}.
 \label{equation:13}
\end{eqnarray}
For    $\tilde j>r$ we have 
\begin{eqnarray*}
  g(r,s; 2n+2|\tilde j,\tilde k) \!\!\!\!
  & = & \!\!\!\! q_{1,0} \left\{q_{0,2} \left[ p_{r+1,s}(2n\,|\,\tilde j, \tilde k) - p_{r-1,s+1}(2n\,|\,\tilde j, \tilde k)  \right] \right.
  \nonumber \\
 & + & \!\!\!\! \left. q_{0,1}  \left[ p_{r+1,s-1}(2n\,|\,\tilde j, \tilde k) - p_{r-1,s}(2n\,|\,\tilde j, \tilde k)  \right]\right\}.
 \label{equation:13new}
\end{eqnarray*}
\end{theorem}
\begin{proof}
Recalling (\ref{eq:probg}), for   $\tilde j<r$ and 
$s, \tilde k\in \mathbb Z$, the first-passage-time probabilities 
can be expressed in terms of the taboo probability (\ref{equation:24}) as 
$$
 g(r,s; 2n+2|\tilde j,\tilde k)
 =q_{0,0} \left[
 q_{1,2} \,p^{\langle r\rangle}_{r-1,s}(2n\,|\,\tilde j, \tilde k)
 +q_{1,1} \,p^{\langle r\rangle}_{r-1,s+1}(2n\,|\,\tilde j, \tilde k)\right], 
 \qquad n\in \mathbb{N}_0.
$$ 
Making use of Theorem \ref{teorema1}  we thus obtain (\ref{equation:13}). 
The case   $\tilde j>r$ can be proved similarly. 
\end{proof}
\par
Figures \ref{fig:fptg4}, \ref{fig:fptg5} and \ref{fig:fptg6} show some plots of the first-passage-time probabilities obtained 
in Theorem \ref{teorema2}. The shapes of the probability distributions reflect the position of the initial state 
with respect to the straight-line boundary. 
Moreover, we note that the choices of $q_{i,j}$ in Figures \ref{fig:fptg4}$\div$\ref{fig:fptg6} 
implies that $\mu_1=0$ and $\mu_2=a\sqrt{3}/12$, due to Proposition \ref{mean_var_cov}. 
Hence, the random walk is biased toward large values of the $y$-coordinate, and thus 
the probabilities $g(r,s; 2n+2|0,0)$ shift to the right  with increasing $r$.  
%
\begin{figure}[t]
\centering
\includegraphics[width=0.95\textwidth]{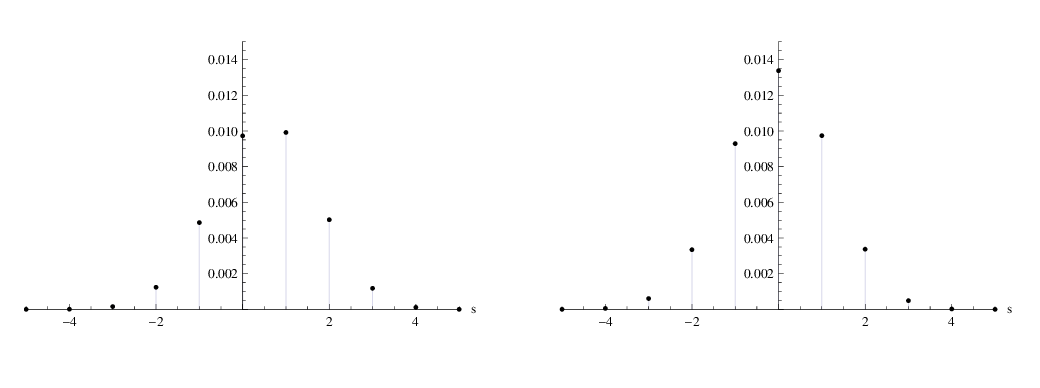}\\
\caption{Plot of $g(r,s; 12|0,0)$ 
with $q_{0,0}=q_{1,0}=1/2$, $q_{0,1}=q_{1,2}=1/3$ 
and $q_{0,2}=q_{1,1}=1/6$ 
for $r=1$ (on the left) and $r=2$ (on the right).}
\label{fig:fptg4}
\end{figure}
%
\begin{figure}[t]
\centering
\includegraphics[width=0.95\textwidth]{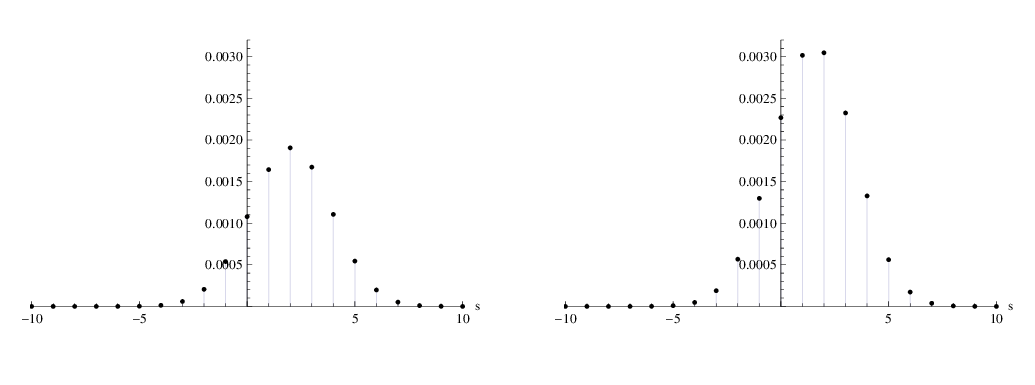}\\
\caption{Plot of $g(r,s; 30|0,0)$ 
with $q_{0,0}=q_{1,0}=1/2$, $q_{0,1}=q_{1,2}=1/3$ 
and $q_{0,2}=q_{1,1}=1/6$ 
for $r=1$ (on the left) and $r=2$ (on the right).}
\label{fig:fptg5}
\end{figure}
%
\begin{figure}[t]
\centering
\includegraphics[width=0.95\textwidth]{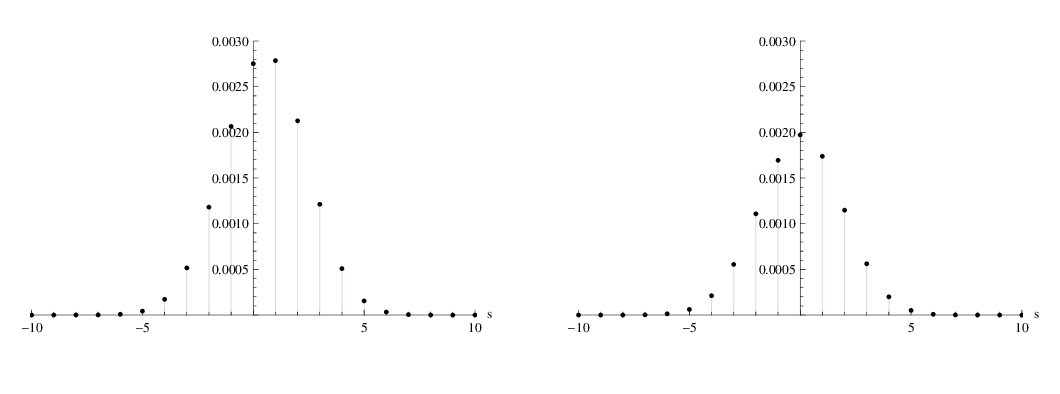}\\
\caption{Plot of $g(r,s; 30|0,0)$ 
with $q_{0,0}=q_{1,0}=1/2$, $q_{0,1}=q_{1,2}=1/3$ 
and $q_{0,2}=q_{1,1}=1/6$ 
for $r=4$ (on the left) and $r=5$ (on the right).}
\label{fig:fptg6}
\end{figure}
\par
The results obtained in Theorems \ref{th:probtaboo} and \ref{teorema2}, which rely on the 
symmetry property given in point (i) of Corollary \ref{coroll:symm}, can be extended to other forms of boundaries 
by exploiting the symmetries in points (ii) and (iii) of such corollary. 
\par
With reference to (\ref{eq:probh}) let us now introduce the following cumulative distribution function, for 
$r,\tilde j,\tilde k\in \mathbb{Z}$ and $\tau\in \mathbb{N}_0$, 
\begin{equation}
 H(r; 2\tau+2|\tilde j,\tilde k)
 =\mathbb{P}_0\Big[ T_{r}(\tilde j,\tilde k)\leq 2\tau+2 \Big]
 =\sum_{n=0}^{\tau} h(r; 2n+2|\tilde j,\tilde k).
 \label{eq:probcdfH}
\end{equation}
As example, we show  some plots of function (\ref{eq:probcdfH}) in Figure \ref{fig:fptG}, 
for different choices of $r$. Clearly, it shows that $H(r; 2\tau+2|\tilde j,\tilde k)$ is 
decreasing when $r$ increases, and thus the boundary ${\cal B}_r$ runs far from the initial state. 
%
\begin{figure}[t]
\centering
\includegraphics[width=0.5\textwidth]{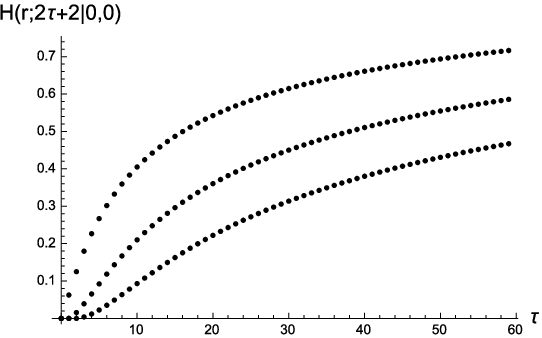}\\
\caption{
Plot of $H(r; 2\tau+2|0,0)$ 
with $q_{0,0}=q_{1,0}=1/2$, $q_{0,1}=q_{1,2}=1/3$ 
and $q_{0,2}=q_{1,1}=1/6$ 
for $r=2$, $3$, $4$  (from top to bottom). 
}
\label{fig:fptG}
\end{figure}
\par
In the chemistry literature wide interest is given to the self-avoiding random walks on lattices, 
i.e.\ walks subject to the condition that no lattice site may be visited more than once (see, for instance, 
\cite{Domb}). Indeed, in polymer science self-avoiding random walks are used as a simple way to describe 
polymeric chains. Unfortunately, mathematical calculations concerning such walks are quite hard, 
since the sample paths do not intersect themselves. In this setting, in analogy with (\ref{eq:Trjk}), 
let us denote by $T_{r}^{\rm sa}(\tilde j,\tilde k)$ the first-passage time through ${\cal B}_r$ when 
the random walk $(X_n, Y_n)$ does not visit the hexagonal lattice sites more than once. 
Hence, since the number of self-avoiding random walks is less than the regular one, 
it is obvious that 
$$
 \mathbb{P}_0[ T_{r}^{\rm sa}(\tilde j,\tilde k)\leq 2\tau+2] \geq H(r; 2\tau+2|\tilde j,\tilde k)
 \qquad \hbox{for all $r,\tilde j,\tilde k\in \mathbb{Z}$ and $\tau\in \mathbb{N}_0$.}
$$ 
In other terms, we have $T_{r}^{\rm sa}(\tilde j,\tilde k) \leq_{\rm st} T_{r} (\tilde j,\tilde k)$,
where $ \leq_{\rm st} $ denotes the usual stochastic order (see, for instance, \cite{Shaked}). 
Consequently, making use of Theorem \ref{teorema2} and Eqs.\ (\ref{eq:relprobhg}) and (\ref{eq:probcdfH}),
under the assumptions of Theorem \ref{th:probtaboo} we can obtain useful lower bounds to 
$\mathbb{P}_0[ T_{r}^{\rm sa}(\tilde j,\tilde k)\leq 2\tau+2]$. Such information is useful to 
investigate the first-reaching-time of a preassigned length along the $x$-axis for two-dimensional 
polymeric chains that grow according to random rules as those of the random walk on hexagonal lattices. 
\par
For the cases treated in Figure \ref{fig:fptG}, from (\ref{eq:probcdfH}) one has 
$H(2; 122 | 0,0)=0.7185$, 
$H(3; 122 | 0,0)=0.5887$, 
$H(4; 122 | 0,0)=0.4709$, for instance. These values provide suitable lower bounds to the probability that 
a polymeric chain described by a self-avoiding random walk, starting at the origin of the hexagonal lattice and 
growing according to the probabilistic rules given in Figure \ref{fig:fptG}, before 122 steps 
reaches a location situated along the line ${\cal B}_2$, ${\cal B}_3$, ${\cal B}_4$, respectively. 

\section{Concluding Remarks}
Random walks on hexagonal lattices have a long history. The properties usually studied are concerning the 
transient and the recurrent behavior. In this paper, by exploiting the fact that the hexagonal lattice is a bipartite graph, 
we have developed a generating function-based approach to obtain closed-form expression for the state probabilities 
of the random walk. The asymptotic behavior of such stochastic process has also been treated in terms of 
large deviation and moderate deviation theory. Finally, since the state probabilities satisfy customary symmetry 
properties (see Corollary \ref{coroll:symm}) we have been able to obtain suitable first-passage-time probabilities 
in a closed form.
\par
Specifically, the results obtained in Section 5 can be applied to several contexts. Indeed, first-passage-time problems 
deserve interest in a variety of situations where the first reaching time of critical states has a relevant role.
For instance, with reference to the models treated by \cite{Prasad} and \cite{Pulliam}, 
first-passage times of random walks on a hexagonal lattice can describe the exit times from assigned regions 
of animals searching for resources. Similarly, in the honeycomb Poisson-Voronoi access cellular network model, 
first-passage times are useful to model the reaching of areas with weak signal for mobile users. 
Another example arising in polymer science has been mentioned at the end of Section 5. 
\par
Possible future developments of the present investigation deal with developing asymptotic results for hitting 
probabilities through regions of different form, along the line mentioned in Remark \ref{rem:Collamore}. 

\section*{Acknowledgments}

This work was partially supported by the groups GNAMPA and GNCS of INdAM (Istituto Nazionale di Alta Matematica), 
by  MIUR - PRIN 2017, project ``Stochastic Models
for Complex Systems'', no.\ 2017JFFHSH, 
by the MIUR Excellence Department Project awarded to
the Department of Mathematics, University of Rome Tor
Vergata (CUP E83C18000100006), and by University of Rome
Tor Vergata (research programme "Mission: Sustainability",
project ISIDE (grant no. E81I18000110005)). 

\par
The authors warmly thanks two anonymous referees and the associate editor for their useful comments that 
improved the paper.

\end{document}